\documentclass[11pt]{amsart}
\usepackage{amssymb,amscd,amsmath,enumerate}
\usepackage[all]{xy}
\newtheorem{thm}{Theorem}[section]

\newtheorem{prop}[thm]{Proposition}
\newtheorem{lem}[thm]{Lemma}
\theoremstyle{remark}

\newtheorem{question}[thm]{Question}

\theoremstyle{definition}

\numberwithin{equation}{section}

\renewcommand{\bar}{\overline}

\newcommand{\A}{{\mathbb{A}}}

\newcommand{\F}{{\mathbb{F}}}

\newcommand{\Z}{{\mathbb{Z}}}

\newcommand{\uW}{{\underline{W}}}

\newcommand{\uC}{{\underline{C}}}
\newcommand{\uG}{{\underline{G}}}
\newcommand{\uU}{{\underline{U}}}

\newcommand{\uX}{{\underline{X}}}
\newcommand{\uY}{{\underline{Y}}}
\newcommand{\uZ}{{\underline{Z}}}
\newcommand{\cG}{{\mathcal{G}}}

\newcommand{\sG}{{\sf {G}}}
\newcommand{\sS}{{\sf {S}}}
\newcommand{\sX}{{\sf {X}}}

\renewcommand{\Pr}{\mathbf {P}}

\newcommand{\Gal}{\mathrm{Gal}}

\newcommand{\SU}{\mathrm{SU}}

\newcommand{\Span}{\mathrm{Span}}
\newcommand{\Stab}{\mathrm{Stab}}

\newcommand{\Spec}{\mathrm{Spec}\;}

\newcommand{\im}{\mathrm{im}}

\newcommand{\bg}{\mathbf g}

\newcommand{\Alt}{{\raise 2pt\hbox{$\scriptstyle\bigwedge$}}}

\begin{document}
\title{Most Words are Geometrically Almost Uniform}

\author{Michael Larsen}
\email{mjlarsen@indiana.edu}
\address{Department of Mathematics\\
    Indiana University \\
    Bloomington, IN 47405\\
    U.S.A.}

\begin{abstract}
If $w$ is a word in $d>1$ letters and $G$ is a finite group, evaluation of $w$ on a uniformly randomly chosen 
$d$-tuple in $G$ gives a random variable with values in $G$, which may or may not be uniform.
It is known \cite{LST} that if $G$ ranges over finite simple groups of given root system and characteristic, a positive proportion of words $w$
give a distribution which approaches uniformity in the limit as $|G|\to \infty$.  In this paper, we show that the proportion is in fact $1$.

\end{abstract}

\subjclass{Primary 20P05; Secondary 11G25, 14G15, 20G40}

\thanks{The author was partially supported by NSF grant DMS-1702152.}

\maketitle

\section{Introduction}

A \emph{word} for the purposes of this paper is an element of the
free group $F_d$. For any finite group $G$, the word $w$ defines a
word map $w_G\colon G^d \to G$ by substitution; we denote it $w$ when $G$ is understood. 
If $U_G$ defines the uniform measure on $G$, we can measure the failure of random values of $w$ to
be uniform by comparing the pushforward $w_* U_{G^d}$ to the uniform distribution $U_G$.
We say $w$ is \emph{almost uniform} for an infinite family of finite groups $G$ if 
$$\lim_{|G|\to \infty} \Vert w_* U_{G^d}-U_G\Vert = 0,$$
where $\Vert\cdot\Vert$ denotes the $L^1$ norm, and $G$ ranges over the groups of the family.
We are particularly interested in the family of finite simple groups.  

When $w$ is of the form $w_0^n$ for some $n\ge 2$, then $w$ is said to be a \emph{power word}.
It is easy to see that power words are not almost uniform for finite simple groups; for instance, in large symmetric groups,
most elements are not $n$th powers at all  \cite{Pouyanne}.  There has been speculation as to whether all non-power words 
are almost uniform for finite simple groups (see, e.g., \cite[Problem~4.7]{Shalev} and \cite[Question~3.1]{Larsen14}).  Since power words are exponentially thin \cite{LM},  one could ask an
easier question: is the set of words which are not almost uniform for finite simple groups thin?  Or, easier still, does it have density $0$?
Some words are known to be almost uniform for finite simple groups:
primitive words, which are exactly uniform for all groups; the commutator word $x_1 x_2 x_1^{-1} x_2^{-1}$ by \cite{GS}, words of the
form $x_1^m x_2^n$ by \cite{LS4}, and, recently, all words of \emph{Waring type}, i.e., words which can be written as a product of two non-trivial words involving disjoint variables \cite[Theorem~1]{LST}.
The fundamental group of any orientable genus $g$ surface is therefore covered for all $g\ge 1$, and, more generally, various words in which some variables appear exactly twice can also be treated by combining the idea of 
Parzanchevski-Schul \cite{PS} with the method of Liebeck-Shalev \cite{LiS}
All of these words, of course, are in some sense rare and atypical.

From the point of view of algebraic geometry, the easiest families of finite simple groups to consider are those of the form $\uG(\F_{q^n})/Z(\uG(\F_{q^n}))$, where $\uG$ is a simple, simply connected algebraic group over $\F_q$, and $n$ ranges over the positive integers.  We say that $w$ is \emph{geometrically almost uniform} for $\uG$ if it is so for this family of groups. 
In \cite[Theorem~2]{LST}, it is proved that this property is equivalent to an algebro-geometric condition on $w$, namely that
the morphism of varieties $w_{\uG}\colon \uG^d\to \uG$ (which by a theorem of Borel \cite{Borel} is dominant) has geometrically irreducible generic fiber.
Using this criterion, it is proved in \cite[Theorem~3]{LST} that for each $d$, there exists a set of words of density greater than $1/3$ which are almost uniform for $\uG$ for all $\uG/\F_q$.  (Note that this does not imply that these words are almost uniform for the family of all finite simple groups of Lie type.)

The main result of this paper is that for each $\uG$ the set of words which are geometrically almost uniform for $\uG$ has density $1$.  More explicitly:

\begin{thm}
\label{main}
Let $\F_q$ and $\uG$ be fixed.  Let $i_1,i_2,\ldots$ be chosen independently and uniformly from $\{1,\ldots,d\}$.  Let $w = x_{i_1}\cdots x_{i_n}$ be a random word of length $n$ defined in this way.
Then the probability that $w$ is geometrically almost uniform for $\uG$ goes to $1$ as $n\to \infty$.

\end{thm}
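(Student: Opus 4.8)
The plan is to use the criterion of \cite[Theorem~2]{LST}: it suffices to show that with probability tending to $1$, the word map $w_{\uG}\colon \uG^d\to\uG$ has geometrically irreducible generic fiber. Equivalently (passing to $\bar\F_q$ and working with the morphism of varieties), the generic fiber is irreducible if and only if the field of rational functions on $\uG^d$ pulled back from $\uG$ is algebraically closed in the function field of $\uG^d$; the obstruction to irreducibility is measured by the finite group $\Gamma_w$ which is the Galois group of the Galois closure of the generic fiber over the generic point of $\uG$, acting on the set of geometric components. So the goal is: with probability $\to 1$, this monodromy group acts transitively (indeed we want the fiber literally irreducible, i.e.\ one component, which is automatic once we rule out any nontrivial factorization). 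I would reformulate the event ``$w$ is \emph{not} geometrically almost uniform'' as: the generic fiber of $w_{\uG}$ is reducible, and try to bound its probability.

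First I would set up a \emph{stability/dimension} estimate. Write the random word as $w_n = x_{i_1}\cdots x_{i_n}$. The key structural observation is that one can build $w_n$ from $w_{n-1}$ by right-multiplication by a random generator, and this interacts well with the fibration $\uG^d\to\uG$. A cleaner approach: use the substitution trick from \cite{LST} (and its ancestors in \cite{GS}, \cite{LS4}) that if a word $w$ can be written, after a change of variables in $F_d$ or after specializing some variables, in ``Waring form'' $w = u\cdot v$ with $u,v$ nontrivial in disjoint variables, then $w$ is geometrically almost uniform. The point is that a long random word, with probability $\to 1$, admits such a decomposition — or more precisely, a \emph{generic specialization} of it does, which is enough because geometric irreducibility of the generic fiber of $w_{\uG}$ follows from geometric irreducibility of the generic fiber of the specialized map (a dominant map with irreducible generic fiber remains so after a dominant base change / generic section). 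So the combinatorial heart is: a random word of length $n$ in $d>1$ letters, with probability $1-o(1)$, has the property that \emph{some} variable $x_j$ occurs, and upon reading the word we can find a position splitting it as a concatenation where the two halves, after the free reduction is accounted for, become words in disjoint variable sets once we further specialize. In fact the simplest route is: with probability $\to 1$ the reduced word $w_n$ contains a subword of the form $x_j^{} (\text{something}) x_j^{-1}$ isolating an inner word, or contains two occurrences of some pattern letting us apply the Parzanchevski–Schul idea \cite{PS} combined with \cite{LiS}; the relevant event is a standard first-moment / Borel–Cantelli computation on random walks on $F_d$.

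The cleanest plan, and the one I expect the paper to follow, is: (1) reduce, via \cite[Theorem~2]{LST}, to geometric irreducibility of the generic fiber; (2) show that geometric irreducibility of the generic fiber of $w_{\uG}$ is implied by geometric irreducibility of the generic fiber of $w'_{\uG}$ whenever $w'$ is obtained from $w$ by the substitution $x_j\mapsto x_j x_k$ (or $x_j \mapsto$ a longer word), because such a substitution corresponds to a dominant self-map of $\uG^d$ that is compatible with the word maps — this is the mechanism by which ``Waring type'' propagates; (3) prove a purely group-theoretic/combinatorial lemma that a random word $w_n$ of length $n$, with probability $1-o(1)$, becomes a word of Waring type after such substitutions (equivalently: the image of $w_n$ under some endomorphism of $F_d$ of this substitution type is a product of two nontrivial words in disjoint variables). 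Then apply \cite[Theorem~1]{LST} (every word of Waring type is almost uniform for finite simple groups, hence in particular geometrically almost uniform for $\uG$) to conclude. Step (3) is essentially: in a random walk of length $n$ on the $2d$-letter alphabet, the probability of avoiding every ``splittable'' configuration decays exponentially in $n$ — a manipulation of the transfer matrix / subadditivity of $-\log$ of these probabilities.

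I expect the main obstacle to be step (2) and its precise formulation — making the geometric-irreducibility-propagation under variable substitution \emph{unconditional}. A substitution $x_j\mapsto x_jx_k$ induces a morphism $\phi\colon\uG^d\to\uG^d$ which is dominant (even surjective, a tower of affine-space bundles), and $w_{\uG}\compose\phi = w'_{\uG}$. If $w_{\uG}$ had geometrically reducible generic fiber this would not obviously force $w'_{\uG}$ to, since we are going the wrong way; the correct direction is that geometric irreducibility of the generic fiber of $w'_{\uG}=w_{\uG}\compose\phi$ would force it for $w_{\uG}$ only if $\phi$ has irreducible generic fibers and is ``fiber-surjective'' appropriately — whereas what we actually want is the converse. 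So the right move is to run the implication in the direction that a Waring-type word $w'$ is outright almost uniform (citing \cite[Theorem~1]{LST} directly, not its geometric avatar), and then observe that almost uniformity of $w'$ for the family $\uG(\F_{q^n})/Z$ forces almost uniformity of $w$ for the \emph{same} family provided the fibers of the induced map on finite groups are all of the same size — which holds because $\phi$ on finite points is a surjection with all fibers of size a fixed power of $q$, being an iterated $\F_{q^n}$-vector-group torsor. Hence $w'_*U_{\uG^d(\F_{q^n})}=w_*U_{\uG^d(\F_{q^n})}$ exactly, and geometric almost uniformity of $w$ follows. The remaining work is then entirely the combinatorial Lemma in step (3), which I would prove by exhibiting one fixed ``good'' finite pattern of length $\ell$ whose occurrence (as a factor, up to the substitution) guarantees Waring type, and noting a random word of length $n$ contains it with probability $1-o(1)$ by a standard renewal/second-moment argument.
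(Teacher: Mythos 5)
Your step (1) matches the paper, and your observation in step (2) that a substitution $x_j\mapsto x_jx_k$ induces an isomorphism of $\uG^d$ intertwining the two word maps (so that $w'_*U_{\uG(\F_{q^n})^d}=w_*U_{\uG(\F_{q^n})^d}$ exactly) is correct as far as it goes. But the argument collapses at step (3), which carries the entire weight of the proof. Being of Waring type is a \emph{global} property: one needs $w=uv$ with $u$ and $v$ nontrivial and involving \emph{disjoint} sets of variables. No fixed finite pattern occurring as a factor of $w$ can certify this: in a random word of length $n$ every variable occurs, with probability $1-o(1)$, in every window of length $C\log n$, so no splitting point separates the variable sets, and a substitution $x_j\mapsto x_jx_k$ acts globally and does not remove occurrences of $x_j$ from either half. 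So your first-moment/renewal argument establishes the wrong event. Words that are Waring type up to $\Aut(F_d)$ are precisely the kind of ``rare and atypical'' family the introduction warns about; indeed the paper points out that even the strictly larger class of words with primitive abelianization has density bounded away from $1$, and that ``a new idea is needed.'' Nothing in your proposal supplies a reason to believe the Waring-up-to-substitution words have density $1$, and the local-pattern mechanism you propose for proving it is invalid.

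The paper's actual route is entirely different and does not pass through Waring type. It first shows (Proposition~\ref{gcd}) that with probability $\to 1$ the g.c.d.\ $m=\gamma(\bar w)$ of the abelianization is nonzero and bounded, so that $w(\uG(\F_{q^n})^d)$ contains all $m$th powers. The geometric heart is a Chebotarev-type criterion (Propositions~\ref{image-size}, \ref{Cheb}, and \ref{key}): comparing $w_{\uG}$ with the $m$th power map $\phi$ produces finitely many explicit test sets $X_{n,i}\subset\uG(\F_{q^n})$, of positive density, such that any dominant $\theta$ whose image contains $\phi(\uG(\F_{q^n}))$ and meets every $X_{n,i}$ must have geometrically irreducible generic fiber. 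Finally, to show a random word meets these test sets, the paper runs the induced random walk on $\uG(\F_{q^n})^N$ with $N=q^n$, using the Mar\'oti--Tamburini theorem to obtain a $d$-element generating set of this large power and Lemma~\ref{Markov} to equidistribute the walk. None of these ingredients appears in your proposal, so the gap is not a missing detail but the central idea of the proof.
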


The idea of the proof is as follows.  
In \cite[Corollary~2.3]{LST}, it is proved that if the image $\bar w$ of $w$ under the abelianization map $F_d\to \Z^d$ is primitive, i.e., if $\gamma(\bar w) = 1$, where $\gamma$ denotes the g.c.d. of its coordinates,
then $w$ is almost uniform for every $\uG$, the idea being that $w_{\uG(\F_{q^n})}$ is then surjective for all $n$, and this implies that $w_{\uG}$ does not factor through a non-birational generically finite 
morphism $\uX_0\to \uG$.

Now, the image of a random walk on $F_d$ under the abelianization map is a random walk on $\Z^d$.  
If $\sX_{d,n}$ is the endpoint of a random walk of length $n$ on $\Z^d$, then 
$$\limsup_{n\to \infty} \Pr[\gamma(\sX_{d,n})=1] < 1$$ 
for all $d$, so this is not good enough to get a result which covers almost all words.  A new idea is needed.

By a probabilistic analysis, we prove that for each $d$,
$$\lim_{M\to \infty} \liminf_{n\to \infty} \Pr[1\le \gamma(\sX_{d,n})\le M] = 1.$$
Thus, it suffices to prove that for each $d\ge 2$ and $m>0$, in the limit as $n$ goes to infinity, the fraction of $w$ of length $n$ with $\gamma(\bar w) = m$ for which $w$ is almost uniform in rank $\le r$ goes to $1$.
For any such $w$ and any group $G$, the image of $w_G$ contains all $m$th powers in $G$.  For $m>1$, this no longer implies geometric irreducibility of the generic fiber of $w_{\uG}$, but it puts very strong constraints on which quasi-finite morphisms $\uX_0\to \uG$ it can factor through.

To see how to exploit such constraints, consider the following toy problem.  Suppose $f\colon \A^1\to \A^1$ is defined over $\F_q$; for all $n$, $f(\F_{q^n})$ contains all squares in $\F_{q^n}$; and for some $n$, $f(\F_{q^n})$ contains a non-square.  We claim this implies $\deg f=1$.  

Indeed, consider the curve $\uC\colon y^2 = f(x)$.  The conditions on the image of $f$ imply that $\uC$ is geometrically irreducible and therefore has $(1+o(1))q^n$ points over $\F_{q^n}$.  Consider the morphism of degree $\deg f$ from $\uC$ to the affine line given by the function $y$.  By the Chebotarev density theorem for finite extensions of $\F_q(t)$, in the limit as $n\to \infty$, a fixed positive proportion of points in $\A^1(\F_{q^n})$ have preimage in $\uC(\F_{q^n})$ consisting of $\deg f$ points.  Since the $y$-map is surjective on $\F_{q^n}$-points, this implies that $\deg f=1$.

To apply this idea in the word map setting, one needs to find elements in $w(\uG(\F_{q^n})^d)$ which play the role of non-square elements in $f(\F_{q^n})$.  We do not need to find them for all $w$, just for almost all in an asymptotic sense.  An approach to achieving this is to fix a $d$-tuple $\bg\in \uG(\F_{q^n})^d$ and estimate the probability that $w(\bg)$ is a ``non-square'' element.
For large enough $n$, one can view $w(\bg)$ as uniformly distributed in $\uG(\F_{q^n})$.  In order to get the probability of success to approach $1$, it is necessary to use not a single $\bg$ but a sufficiently large number of independent choices $\bg_1,\ldots,\bg_N$.  The existence of $N$ elements of $\uG(\F_{q^n})^d$ which are independent in this sense (in the limit $n\to\infty$) depends on $\uG(\F_{q^n})^N$ being $d$-generated.  There is a substantial literature,
going back to work of Philip Hall \cite{Hall}, concerning the size of minimal generating sets of $G^N$, where $G$ is a finite simple group.  We use a recent result of Mar\'oti and Tamburini \cite{MT}.

\section{Varieties over Finite Fields}

Throughout this section, a \emph{variety} will always mean a geometrically integral affine scheme of finite type over a finite field.  
Let $A\subset B$ be an inclusion of finitely generated $\F_q$-algebras such $\uX := \Spec A$ and $\uY := \Spec B$ are normal varieties.
Let $\phi\colon \uY\to \uX = \Spec A$ correspond to the inclusion $A\subset B$.  Let $K$ and $L$ denote the fraction fields of $A$ and $B$ respectively.
Let $K_0$ denote the separable closure of $K$ in $L$, which is a finite extension of $K$ since $L$ is finitely generated.
Let $A_0$ denote the integral closure of $A$ in $K_0$, $\uX_0$ the spectrum of $A_0$, and $\psi\colon \uX_0\to \uX$ the morphism corresponding to the inclusion $A\subset A_0$.
As $B\supset A$ is integrally closed in $L\supset K_0$ it follows that $B$ contains $A_0$, so $\phi$ factors through $\psi$.
\begin{prop}
\label{image-size}
For all positive integers $n$, 
\begin{equation}
\label{inclusion}
\phi(\uY(\F_{q^n})) \subset \psi(\uX_0(\F_{q^n})),
\end{equation}
and
\begin{equation}
\label{close}
| \psi(\uX_0(\F_{q^n}))| - |\phi(\uY(\F_{q^n}))| = o(q^{n\dim\uX}).
\end{equation}
Moreover $\psi$ is an isomorphism if and only if $\phi$ has geometrically irreducible generic fiber; if not, there exists $\epsilon > 0$ and a positive integer $m$ such that
\begin{equation}
\label{not-onto}
\psi(\uX_0(\F_{q^n}))| < (1-\epsilon)q^{n\dim\uX}
\end{equation}
if $m$ divides $n$.
\end{prop}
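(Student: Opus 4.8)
The plan is to work through the statement one assertion at a time, using the Lang--Weil estimates as the main quantitative tool. For \eqref{inclusion}: since $\phi$ factors through $\psi$ as a morphism of $\F_q$-schemes, it factors on $\F_{q^n}$-points as well, giving $\phi(\uY(\F_{q^n})) = \psi(\phi_0(\uY(\F_{q^n}))) \subset \psi(\uX_0(\F_{q^n}))$, where $\phi_0\colon \uY\to \uX_0$ is the induced morphism. This is immediate and requires no estimates.

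For \eqref{close}, the point is that $\phi_0\colon \uY\to\uX_0$ has \emph{geometrically irreducible generic fiber} by construction: $L$ is a purely inseparable extension of $K_0$ (since $K_0$ is the separable closure of $K$ in $L$), so $L\otimes_{K_0}\bar K_0$ is a local ring, i.e. $\Spec(B\otimes_{A_0} \bar A_0)$ is irreducible over the generic point. Hence for each point $x\in \uX_0(\F_{q^n})$ in a suitable dense open subset $\uU_0\subset \uX_0$, the fiber $\phi_0^{-1}(x)$ is geometrically irreducible of dimension $\dim\uY-\dim\uX_0$, hence nonempty with a point over $\F_{q^n}$ (again Lang--Weil, applied to the fiber; one needs the fiber dimension to be $\ge 0$, which holds since $\phi$ is dominant). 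Thus every $\F_{q^n}$-point of $\uU_0$ lies in $\phi_0(\uY(\F_{q^n}))$, and therefore $\psi(\uU_0(\F_{q^n}))\subset \phi(\uY(\F_{q^n}))\subset\psi(\uX_0(\F_{q^n}))$. The two outer sets differ by at most $|\uX_0(\F_{q^n})\setminus\uU_0(\F_{q^n})| = o(q^{n\dim\uX_0}) = o(q^{n\dim\uX})$ (using $\dim\uX_0=\dim\uX$ and that the complement of a dense open in a variety has strictly smaller dimension, so Lang--Weil gives the $o$-bound); since $\psi$ is finite, passing through $\psi$ only decreases cardinalities, giving \eqref{close}.

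For the dichotomy: $\psi$ is an isomorphism iff $K_0=K$ iff $L/K$ is purely inseparable iff the generic fiber of $\phi$ is geometrically irreducible (the generic fiber is $\Spec(L\otimes_K\bar K)$, which is irreducible iff $\bar K$ is linearly disjoint from $L$ over $K$, i.e. iff $K_0=K$). If $\psi$ is not an isomorphism, then $K_0/K$ is a nontrivial finite separable extension; let $\widetilde K$ be its Galois closure, with group $\Gamma$ and nontrivial subgroup $\Gamma_0 = \Gal(\widetilde K/K_0)$. Choose $m$ so that $\F_{q^m}$ contains all the constants of $\widetilde K$; then for $m\mid n$ the geometric and arithmetic monodromy of the cover $\uX_0\to\uX$ coincide, and by the Chebotarev density theorem for the function field $K$ over $\F_{q^n}$ (Lang--Weil applied to the various connected components of the fiber product, as in the toy problem in the introduction), the proportion of $x\in\uX(\F_{q^n})$ with \emph{no} $\F_{q^n}$-preimage in $\uX_0$ tends, as $n\to\infty$ through multiples of $m$, to the proportion of $\gamma\in\Gamma$ acting without fixed points on $\Gamma/\Gamma_0$, which is positive since $\Gamma_0\ne\Gamma$ (e.g. the identity coset is fixed only by elements of a proper subgroup, and a standard counting argument — or Jordan's theorem on fixed-point-free elements — makes this quantitative). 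Taking $\epsilon$ to be, say, half this limiting proportion, and absorbing a further $o(q^{n\dim\uX})$ error from the non-normal or ramified locus, gives $|\psi(\uX_0(\F_{q^n}))| \le (1-2\epsilon)q^{n\dim\uX} + o(q^{n\dim\uX}) < (1-\epsilon)q^{n\dim\uX}$ for $m\mid n$ and $n$ large; enlarging $m$ to kill the small-$n$ cases finishes \eqref{not-onto}.

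The main obstacle is the Chebotarev step: one must be careful that ``geometrically irreducible'' is preserved in the fiber product defining the cover, that the arithmetic monodromy group equals the geometric one after the base change to $\F_{q^m}$ (this is exactly what forces the hypothesis ``$m$ divides $n$''), and that the error terms in all the Lang--Weil applications are genuinely $o(q^{n\dim\uX})$ uniformly — this uses that all the varieties in sight have dimension $\dim\uX$ and bounded geometric complexity. The rest is bookkeeping with dimensions and the finiteness of $\psi$.
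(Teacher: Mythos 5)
Your treatment of (\ref{inclusion}) and (\ref{close}) is essentially the paper's: factor $\phi$ through $\psi$, observe that $\uY\to\uX_0$ has geometrically irreducible generic fiber, and apply the uniform Lang--Weil estimate to the fibers over a dense open subset of $\uX_0$, the complement contributing only $o(q^{n\dim\uX})$ points. One local error to fix: $L$ is \emph{not} in general purely inseparable over $K_0$ --- it has transcendence degree $\dim\uY-\dim\uX$ over it --- so your stated reason for the geometric irreducibility of the generic fiber of $\uY\to\uX_0$ does not hold (and ``local ring'' is in any case the wrong criterion; irreducibility is about a unique minimal prime). The correct statement is that $K_0$ is separably algebraically closed in $L$, and the equivalence of that with geometric irreducibility of the generic fiber is exactly \cite[Proposition~4.5.9]{EGA42}, which is what the paper invokes both here and for the dichotomy ``$\psi$ is an isomorphism iff the generic fiber of $\phi$ is geometrically irreducible.'' The fact you need is true; the justification you give for it is not.

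For (\ref{not-onto}) you take a genuinely different route. The paper passes to the Galois closure $\uX_1$ of $\uX_0\to\uX$, base changes so the constants of $K_1$ are rational, shrinks to a locus where $\Gal(K_1/K)$ acts freely, and counts directly: the image of $\uX_1(\F_{q^n})$ in $\uX_0(\F_{q^n})$ has density about $[K_1:K_0]^{-1}$ while its image in $\uX(\F_{q^n})$ has density about $[K_1:K]^{-1}$, so $\psi$ collapses the former onto the strictly smaller latter and a definite proportion of $\uX(\F_{q^n})$ is missed. You instead invoke the function-field Chebotarev density theorem together with Jordan's theorem to produce a positive proportion of points of $\uX(\F_{q^n})$ whose Frobenius acts without fixed points on the fiber of $\uX_0\to\uX$. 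Both arguments are sound and rest on the same base-change-by-$\F_{q^m}$ step to equate arithmetic and geometric monodromy; the paper's version needs only Lang--Weil for $\uX_1$ and elementary fiber-counting for the quotient maps, whereas yours needs the full equidistribution statement --- which the paper does develop separately (Lemma~\ref{Jordan} and Proposition~\ref{Cheb}) for the finer two-cover comparison used later, so your argument is entirely within the paper's toolkit. Your remark about enlarging $m$ to dispose of small $n$ addresses a point the paper leaves implicit.
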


\begin{proof}
As $A\subset A_0\subset B$, the morphism $\phi$ factors through $\psi$,
implying (\ref{inclusion}).

By \cite[Proposition~4.5.9]{EGA42}, $K=K_0$ if and only if the generic fiber of $\phi$ is geometrically irreducible.
By the same proposition, the generic fiber of $\uY\to\uX_0$ is always geometrically irreducible.  By \cite[Th\'eor\`eme~9.7.7]{EGA43}, there is a dense open subset of $\uX_0$
over which the fibers of $\uY\to\uX_0$ are all geometrically irreducible.  Let $\uC$ denote the complement of this subset, endowed with its structure of reduced closed subscheme of $\uX_0$.

It is well known that the Lang-Weil estimate is uniform in families.  There does not seem to be a canonical reference for this fact, but a proof is sketched, for instance in \cite[Proposition~3.4]{LS3}  and in \cite[Theorem~5]{Tao}.
From this, it follows that if $n$ is sufficiently large, for every point of $\uX_0(\F_{q^n})$ over which the morphism $\uY\to \uX_0$ has geometrically irreducible fiber, there exists an $\F_{q^n}$-point in this fiber.  In particular, every point in $\uX_0(\F_{q^n})\setminus \uC(\F_{q^n})$ lies in the image of $\uY(\F_{q^n})\to \uX_0(\F_{q^n})$.  By the easy part of the Lang-Weil bound,
$$|\uC(\F_{q^n})| = O(q^{n\dim\uC}) \le O(q^{n(\dim \uX_0-1)}).$$
Thus, the complement of the image of $\uY(\F_{q^n})\to \uX_0(\F_{q^n})$ has cardinality $o(q^{n\dim\uX})$, which implies (\ref{close}).

If  $\phi$ is not geometrically irreducible, then $[K_0:K] > 1$.  Let $K_1$ denote the Galois closure of $K_0/K$ in a fixed separable closure $\bar K$.
We choose $m$ so that $\F_{q^m}$ contains the algebraic closure of $\F_q$ in $K_1$.  If we are content to limit consideration to $\F_{q^n}$-points of $\uX$ and $\uX_0$, where $m$ divides $n$,
we may replace $\uX$ and $\uX_0$ by the varieties $\uX_{\F_{q^m}}$ and $(\uX_0)_{\F_{q^m}}$ respectively, obtained by base change.
This has the effect of replacing $K$, $K_0$, and $K_1$ by $K\F_{q^m}$, $K_0\F_{q^m}$, and $K_1 \F_{q^m}=K_1$ respectively.  Replacing $q$ by $q^m$,  
we may now assume that $\F_q$ is algebraically closed in $K_1$.


Now, $\Gal(K_1/K)$ acts faithfully on $A_1$ as $\F_q$-algebra.  As $A$ is integrally closed in $K$ and $A_1$ is the integral closure of $A$ in $K_1$, it follows that 
$$A\subset A_1^{\Gal(K_1/K)}\subset A_1\cap K = A,$$
so $A = A_1^{\Gal(K_1/K)}$; likewise, $A_0 = A_1^{\Gal(K_1/K_0)}$.
Geometrically, this means that $\uX$ and $\uX_0$ are the quotients of $\uX_1$ by $\Gal(K_1/K)$ and $\Gal(K_1/K_0)$ respectively.
We denote these quotient maps $\pi$ and $\pi_0$ respectively.  Thus we have the diagram
$$\xymatrix{\uX_1\ar[d]_{\pi_0}\ar@/^1pc/[dd]^{\pi}\\ \uX_0\ar[d]_{\psi}\\ \uX}$$

As the action of $\Gal(K_1/K)$ on $\uX_1$ is faithful and $\uX_1$ is irreducible, there is a dense affine open subvariety of $\uX_1$ on which $\Gal(K_1/K)$ acts freely.  
Replacing $\uX_1$ by this subvariety and $\uX$ and $\uX_0$ by quotients of this subvariety by $\Gal(K_1/K)$ and $\Gal(K_1/K_0)$ respectively affects 
$o(q^{n\dim \uX})$ of the $\F_{q^n}$-points of $\uX$, $\uX_0$, and $\uX_1$, so without loss of generality, we may assume that $\Gal(K_1/K)$ acts freely on $\uX_1$.
Now
\begin{equation}
\label{part}
\psi(\uX_0(\F_{q^n})) =  \psi(\uX_0(\F_{q^n})\setminus \pi_0(\uX_1(\F_{q^n}))) \cup \pi(\uX_1(\F_{q^n})).
\end{equation}
By Lang-Weil, $|\uX_1(\F_{q^n})| = (1+o(1))q^{n\dim\uX}$, so
\begin{align*}|\pi_0(\uX_1(\F_{q^n})))| &= ([K_1:K_0]^{-1}+o(1))q^{mn},\\
|\pi(\uX_1(\F_{q^n}))| &= ([K_1:K]^{-1}+o(1))q^{mn}.
\end{align*}
By (\ref{part}),
$$|\psi(\uX_0(\F_{q^n}))| \le (1-[K_1:K_0]^{-1}+[K_1:K]^{-1}+o(1))q^{mn},$$
which implies (\ref{not-onto}).
\end{proof}

\begin{lem}
\label{Jordan}
Let $G$ be a finite group acting transitively on a set $S$ with more than one element and $H$ a normal subgroup of $G$ such that every element of $H$ has at least one fixed point in $S$.
Then for all $s\in S$, $H\,\Stab_G(s)$ is a proper subgroup of $G$.
\end{lem}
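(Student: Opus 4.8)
The statement is a classical fact going back to Jordan: if a normal subgroup $H$ of a transitive group $G$ is a union of point-stabilizer-conjugates (i.e., every element of $H$ fixes some point), then $H$ together with a point stabilizer cannot be all of $G$. The plan is to argue by contradiction. Suppose $G = H\,\Stab_G(s)$ for some $s \in S$, and write $G_s = \Stab_G(s)$. Since $H \trianglelefteq G$, the product $H G_s$ is a subgroup, and the assumption says it equals $G$; equivalently, $G_s$ surjects onto $G/H$, i.e., every coset of $H$ meets $G_s$.

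The key step is a counting (or averaging) argument on fixed points. Because $G$ is transitive on $S$, the orbit-stabilizer theorem gives $|S| = [G : G_s]$, and the conjugates $G_{s'}$, $s' \in S$, are exactly the point stabilizers, each conjugate to $G_s$. The hypothesis that every $h \in H$ has a fixed point says precisely that
$$
H \subseteq \bigcup_{g \in G} g\, G_s\, g^{-1}.
$$
Now I would count pairs: for each $g \in G$ with $g G_s g^{-1} \supseteq \{h\}$ — more carefully, I would use the Cauchy–Frobenius (Burnside) count of fixed points restricted to $H$. The number of elements of $H$ having at least one fixed point is at most $\sum_{s' \in S} |H \cap G_{s'}|$. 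Using $G = H G_s$ one shows each $G_{s'}$ meets every coset of $H$, hence $|H \cap G_{s'}| = |G_{s'}|/[G:H] \cdot$ (something) — more directly, $|H \cap G_{s'}| = |H| \cdot |G_{s'}| / |H G_{s'}| $, and since $G_{s'}$ is conjugate to $G_s$ and $H$ is normal, $H G_{s'} = g(HG_s)g^{-1} = G$, so $|H \cap G_{s'}| = |H|\,|G_s|/|G| = |H|/|S|$. Summing over the $|S|$ points gives at most $|H|$ elements of $H$ with a fixed point, with equality only if the sets $H \cap G_{s'}$ are pairwise disjoint — but they all contain the identity, so for $|S| > 1$ they are not disjoint, and we conclude the number of fixed-point-possessing elements of $H$ is strictly less than $|H|$, contradicting the hypothesis.

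I would then write this up cleanly, perhaps avoiding the explicit inclusion–exclusion by noting that $1 \in H \cap G_{s'}$ for every $s'$, so
$$
\Big|\bigcup_{s' \in S} (H \cap G_{s'})\Big| \le 1 + \sum_{s' \in S}\big(|H \cap G_{s'}| - 1\big) = 1 + |S|\Big(\frac{|H|}{|S|} - 1\Big) = |H| - |S| + 1 < |H|,
$$
since $|S| > 1$. As the left side must equal $|H|$ under our hypothesis, this is the desired contradiction, so $H G_s \ne G$.

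**Main obstacle.** The only delicate point is justifying $H G_{s'} = G$ for \emph{every} $s'$, not just for $s$: this uses normality of $H$ together with the fact that all point stabilizers are $G$-conjugate (from transitivity), so $HG_{s'}$ is a conjugate of $HG_s = G$, hence equals $G$. Everything else is elementary counting; one just has to be careful that the bound is \emph{strict}, which is exactly where the hypothesis $|S| > 1$ enters.
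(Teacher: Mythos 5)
Your argument is correct, but it takes a different route from the paper. The paper's proof is two lines: it cites Jordan's classical theorem that every non-trivial finite transitive permutation group contains a derangement, concludes that $H$ (having no derangements on $S$) must act intransitively, and then observes that the orbit of $s$ under $H\,\Stab_G(s)$ is just the $H$-orbit of $s$, a proper subset of $S$, so $H\,\Stab_G(s)$ cannot be all of the transitive group $G$. You instead give a self-contained counting proof: assuming $H\,\Stab_G(s)=G$, normality plus conjugacy of point stabilizers forces $H\,G_{s'}=G$ for every $s'$, hence $|H\cap G_{s'}|=|H|/|S|$ by the product formula, and since all these subgroups share the identity, their union has size at most $|H|-|S|+1<|H|$, contradicting the hypothesis that they cover $H$. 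Your computation is right at every step (including the point you flag, that $H\,G_{s'}$ is a conjugate of $H\,G_s$), and the strict inequality is correctly extracted from $|S|>1$. In effect you have inlined the standard Cauchy--Frobenius-style proof of Jordan's derangement theorem, adapted to the relative setting; what this buys is a proof with no external citation, at the cost of a page of counting where the paper spends two sentences. Both are valid.
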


\begin{proof}
By a classical theorem of Jordan, every non-trivial transitive permutation group contains a derangement, so $H$ must act intransitively.  Thus, the orbit of $H\, \Stab_G(s)$ containing $s$
is a proper subset of $S$, which implies the lemma.
\end{proof}

\begin{lem}
\label{Galois}
Let $K$ be a field, $\bar K$ a separable closure of $K$, and $K_1$ and $K_2$  finite extensions of $K$ in $\bar K$.  Suppose $K_1$ is Galois over $K$ and $K_2\neq K$.  
If $K_1\cap K_2 = K$, then there exists an element of $\Gal(\bar K/K_1)$ which does not stabilize any $K$-embedding of $K_2$ in $\bar K$. 
\end{lem}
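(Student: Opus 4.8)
The plan is to reformulate the statement group-theoretically via Galois theory and then invoke Lemma~\ref{Jordan}. Let $n=[K_2:K]$ and let $S$ be the set of $K$-embeddings of $K_2$ into $\bar K$, so $|S|=n\ge 2$ since $K_2\neq K$. The group $G := \Gal(\bar K/K)$ acts on $S$ by post-composition, and this action is transitive because all embeddings of $K_2$ into $\bar K$ are $G$-conjugate. I would take $H := \Gal(\bar K/K_1)$, which is a normal subgroup of $G$ because $K_1/K$ is Galois. The goal is to produce an element of $H$ that fixes no point of $S$, i.e.\ a derangement for the $H$-action; suppose for contradiction that no such element exists, so every element of $H$ has a fixed point in $S$.

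Next I would apply Lemma~\ref{Jordan} with this $G$, $S$, and $H$: the hypotheses are met (the $G$-action is transitive on a set with more than one element, and $H\trianglelefteq G$ with every element of $H$ having a fixed point), so the lemma yields that for every $s\in S$, the subgroup $H\,\Stab_G(s)$ is proper in $G$. Now I need to translate $H\,\Stab_G(s)$ back into field theory. Fix $s\in S$ corresponding to an embedding $\iota\colon K_2\hookrightarrow\bar K$; then $\Stab_G(s) = \Gal(\bar K/\iota(K_2))$ (after identifying $K_2$ with its image, say $s$ is the inclusion of $K_2$). The subgroup $H\,\Stab_G(s) = \Gal(\bar K/K_1)\cdot\Gal(\bar K/K_2)$ corresponds to the fixed field $\bar K^{H\,\Stab_G(s)}$, and the standard Galois-correspondence identity for the compositum gives $\bar K^{H\cdot\Stab_G(s)} = K_1\cap K_2$, i.e.\ the subgroup generated by two such subgroups has fixed field equal to the intersection of the corresponding fields.

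By hypothesis $K_1\cap K_2 = K$, so $H\,\Stab_G(s) = \Gal(\bar K/K) = G$, contradicting that it is proper. Hence the assumption that every element of $H$ has a fixed point in $S$ is false, and there exists $\sigma\in H=\Gal(\bar K/K_1)$ with no fixed point in $S$, i.e.\ $\sigma$ stabilizes no $K$-embedding of $K_2$ in $\bar K$. The only point requiring care — and the main obstacle, such as it is — is the identification $\overline{K}^{\langle H, \Stab_G(s)\rangle} = K_1 \cap K_2$: one must be slightly careful because $H\,\Stab_G(s)$ is \emph{a priori} only the set product of two subgroups, not obviously a subgroup; however $H$ is normal in $G$, so $H\,\Stab_G(s)$ is indeed a subgroup, equal to the subgroup it generates, and its fixed field is $\bar K^H\cap\bar K^{\Stab_G(s)} = K_1\cap K_2 = K$. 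This closes the argument.
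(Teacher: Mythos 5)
Your argument is essentially the paper's: act transitively on the set $S$ of $K$-embeddings of $K_2$, take $H$ to be the subgroup acting trivially on $K_1$, invoke Lemma~\ref{Jordan} to get that $H\,\Stab_G(s)$ is proper, and contradict $K_1\cap K_2=K$ by passing to fixed fields. The one place you are looser than the paper is your choice $G=\Gal(\bar K/K)$: this is in general an infinite profinite group, whereas Lemma~\ref{Jordan} is stated for \emph{finite} $G$, and your final step (``fixed field equal to $K$ forces $H\,\Stab_G(s)=G$'') uses the Galois correspondence, which for infinite extensions applies only to closed subgroups. Both points are easily repaired --- the action factors through the finite quotient $\Gal(K_1K_3/K)$, where $K_3$ is the Galois closure of $K_2$ in $\bar K$, which is exactly the group the paper works with; alternatively, note that the image of $G$ in $\mathrm{Sym}(S)$ is finite and that $H\,\Stab_G(s)$ contains the open subgroup $\Stab_G(s)=\Gal(\bar K/K_2)$ and is therefore closed --- but as written the invocation of Lemma~\ref{Jordan} does not literally satisfy its hypotheses.
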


\begin{proof}
Let $K_3$ be the Galois closure of $K_2$ in $\bar K$ and define $G := \Gal(K_1K_3/K)$.  Thus $G$ acts transitively on the set $S$ of $K$-embeddings of $K_2$ in $\bar K$.
Let $H = \Gal(K_1 K_3/K_1)$, which is normal in $G$ since $K_1/K$ is Galois.  If every element of $\Gal(\bar K/K_1)$ fixes at least one element of $S$, then by Lemma~\ref{Jordan},
$H\,\Stab_G(s)$ is a proper subgroup of $G$, where $s$ denotes the identity embedding of $K_2$ in $\bar K$.  If $L$ is the fixed field of $K_1 K_3$ under $H\,\Stab_G(s)$, then 
$L$ is a non-trivial extension of $K$
contained in both $(K_1 K_3)^H = K_1$ and $(K_1 K_3)^{\Stab_G(s)} = K_2$.

\end{proof}

\begin{prop}
\label{Cheb}
Let $\uX$ be a variety over $\F_q$ with coordinate ring $A$ with function field $K$.  Let $K\subset K_0,K_2\subset \bar K$, and let $K_1$ (resp. $K_3$) denote the Galois closure of $K_0$ (resp. $K_2$)
in $\bar K$.  Let $A_i$ for $0\le i\le 3$ denote the integral closure of $A$ in $K_i$, and let $\uX_i := \Spec A_i$.  If $K_1$ and $K_2$ satisfy the hypotheses
of Lemma~\ref{Galois}, then there exists $\epsilon > 0$ so that for all sufficiently large integers $n$, there are
at least $\epsilon q^{n\dim \uX}$ elements of $\uX(\F_{q^n})$ which lie in the image of $\uX_i(\F_{q^n})\to \uX(\F_{q^n})$ for $i=0$ but not for $i=2$.  
\end{prop}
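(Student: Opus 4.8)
The plan is to combine the Chebotarev density theorem for function fields with the group-theoretic input of Lemma~\ref{Galois}. Let $K_4 := K_1 K_3$, let $A_4$ be the integral closure of $A$ in $K_4$, and set $\uX_4 := \Spec A_4$. As in the proof of Proposition~\ref{image-size}, after replacing $q$ by a suitable power $q^m$ (which only costs $o(q^{n\dim\uX})$ points and restricts $n$ to multiples of $m$), we may assume $\F_q$ is algebraically closed in $K_4$, so that $\uX_4$ is geometrically integral and $\Gal(K_4/K)$ acts on $\uX_4$ with $\uX$ as its quotient. By removing a proper closed subset (again affecting only $o(q^{n\dim\uX})$ points), we may assume $\Gal(K_4/K)$ acts freely on $\uX_4$, so that $\uX_4 \to \uX$ is an \'etale Galois cover with group $G := \Gal(K_4/K)$, and the intermediate covers $\uX_i \to \uX$ correspond to the subgroups $H_i := \Gal(K_4/K_i)$ for $i = 0, 2$.

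Next I would set up the Chebotarev dictionary. For a point $x \in \uX(\F_{q^n})$ not lying under the removed locus, the Frobenius at $x$ determines a well-defined conjugacy class $\mathrm{Frob}_x \subset G$. A point $x$ lies in the image of $\uX_i(\F_{q^n}) \to \uX(\F_{q^n})$ if and only if $\mathrm{Frob}_x$ (as a conjugacy class) meets $H_i$ — equivalently, if some $\F_{q^n}$-point of $\uX_4$ above $x$ has Frobenius landing in $H_i$. So I want to count $x$ whose Frobenius class meets $H_0$ but not $H_2$. By Lemma~\ref{Galois} applied with $K_1$ and $K_2$, there is an element $g_0 \in \Gal(\bar K/K_1) = H_1$ that stabilizes no $K$-embedding of $K_2$; concretely, the image of $g_0$ in $G$ lies in $H_1 \subseteq H_0$ (since $K_0 \subseteq K_1$, so $H_1 \subseteq H_0$) but lies in no conjugate of $H_2$. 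Hence the conjugacy class $C$ of $g_0$ in $G$ is a nonempty set of elements each of which is in $H_0$ (up to conjugacy, i.e.\ $C \cap H_0 \neq \emptyset$ — in fact $C \subseteq \bigcup_{h} h H_0 h^{-1}$ since $H_0$ is not necessarily normal, but $g_0 \in H_1 \subseteq H_0$ gives $C \subseteq \bigcup_h hH_0h^{-1}$) and no element of which is conjugate into $H_2$.

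With this element in hand, the function-field Chebotarev density theorem (see, e.g., the effective form valid over $\F_q(t)$ and its finite extensions) gives that the number of $x \in \uX(\F_{q^n})$ with $\mathrm{Frob}_x = C$ equals $(|C|/|G| + o(1)) q^{n\dim\uX}$. Every such $x$ lies in the image of $\uX_0(\F_{q^n})$ (because a Frobenius in $C$ is conjugate to $g_0 \in H_0$) and in the image of $\uX_i$ for $i = 0$ but, by the defining property of $g_0$, in no conjugate of $H_2$, hence not in the image of $\uX_2(\F_{q^n})$. Taking $\epsilon$ slightly smaller than $|C|/|G|$ and $n$ large (and divisible by $m$) yields the claim. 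The main obstacle is bookkeeping the non-normal subgroup $H_2$ correctly: "$x$ lies in the image of $\uX_2$" translates to "$\mathrm{Frob}_x$ meets \emph{some conjugate} of $H_2$," i.e.\ $\mathrm{Frob}_x$ fixes some point of the $G/H_2$-action, which is precisely the condition Lemma~\ref{Galois} is engineered to violate; one must also ensure that the base-change and shrinking steps do not interfere with the Chebotarev count, which is routine since each discards only $o(q^{n\dim\uX})$ points and the proposition's conclusion is itself stated up to such an error.
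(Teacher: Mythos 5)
Your argument is correct and follows essentially the same route as the paper's proof: form the compositum $K_1K_3$, shrink to a dense open on which $G=\Gal(K_1K_3/K)$ acts freely (costing only $o(q^{n\dim\uX})$ points), use Lemma~\ref{Galois} to produce a class $C$ of elements fixing every point of $G/H_0$ (since they lie in the normal subgroup $H_1\subseteq H_0$) but no point of $G/H_2$, and conclude by the function-field Chebotarev density theorem with $\epsilon<|C|/|G|$. The only divergence is your explicit base change to make $\F_q$ algebraically closed in $K_1K_3$, which restricts $n$ to multiples of $m$; the paper instead invokes Serre's form of the theorem directly, so that its $\epsilon<|G|^{-1}$ is claimed for all large $n$ -- a cosmetic difference for the application, where only infinitely many $n$ (and the corresponding lower bound on $|X_{n,i}|$) are used.
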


\begin{proof}
Let $K_{13} = K_1 K_3$, $A_{13}$ denote the integral closure of $A$ in $K_{13}$, and $\uX_{13}$ denote $\Spec A_{13}$.  Let $G := \Gal(K_{13}/K)$.
The action of $G$ on $\uX_{13}$ is faithful, and $\uX_{13}$ is irreducible, so there exists a dense open affine subvariety $\uU_{13}\subset \uX_{13}$
on which $G$ acts freely.  Replacing $\uX_{13}$, together with its quotients by subgroups of $G$, by $\uU_{13}$ and its corresponding quotients affects only
$o(q^{n\dim\uX})$ $\F_{q^n}$-points of these quotients, and therefore does not affect the statement of the proposition.  We may therefore
assume that we are in the setting of \cite[Theorem~6]{Serre} and can apply the Chebotarev density theorem for varieties.

By Lemma~\ref{Galois}, there exists $g\in G$ such that $g$ acts trivially on $K_1$ but acts without fixed points on the set of $K$-embeddings $K_2\to \bar K$
or, equivalently, on the geometric points lying over any given geometric point of $\uX$ for the covering map $\uX_2\to \uX$.  This implies that if $x\in \uX(\F_{q^n})$ and $g$ belongs to the Frobenius conjugacy class of $x$, then
there is no $q^n$-Frobenius stable point lying over $x$ on $\uX_2\to \uX$, i.e., $x$ does not lie in the image of $\uX_2(\F_{q^n})\to \uX(\F_{q^n})$.  On the other hand,
every geometric point of $\uX_0$ lying over $x$ is stable by the $q^n$-Frobenius, so $x$ lies in the image of $\uX_2(\F_{q^n})\to \uX(\F_{q^n})$.  By  Chebotarev density 
\cite[Theorem~7]{Serre}, the proposition follows for every $\epsilon < |G|^{-1}$. 
\end{proof}

The main technical result of this section is the following.
\begin{prop}
\label{key}
Let $\phi\colon \uY\to \uX$ be a dominant morphism of normal varieties over $\F_q$.  Then there exist subsets $X_{n,i}\subset \uX(\F_{q^n})$, $1\le i\le m$, 
with the following property.
If $\theta \colon \uZ\to \uX$ is any dominant morphism of normal varieties over $\F_q$,
$\theta(\uZ(\F_{q^n}))\supset \phi(\uY(\F_{q^n}))$ for all $n$, and for some $n$, $\theta(\uZ(\F_{q^n}))\cap X_{n,i}$ is non-empty for each $i=1,\ldots,m$, then 
the generic fiber of $\theta$ is geometrically irreducible.
\end{prop}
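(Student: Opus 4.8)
The plan is to use Proposition~\ref{Cheb} together with the factorization machinery of Proposition~\ref{image-size} to build, for a suitable integer $m$, the sets $X_{n,i}$ as "witness sets" that rule out all proper intermediate extensions. First I would set up the canonical factorization attached to $\phi$: let $K$ be the function field of $\uX$, $L$ that of $\uY$, and let $K_0$ be the separable algebraic closure of $K$ in $L$, with $\uX_0 = \Spec A_0$ as in Proposition~\ref{image-size}. If $\psi\colon \uX_0\to\uX$ is already an isomorphism, then by that proposition the generic fiber of $\phi$ is geometrically irreducible, and we may take $m=1$ with $X_{n,1}$ empty (the hypothesis then forces nothing, but the conclusion is trivially not what we need — so instead, in this case, take $X_{n,1} = \uX(\F_{q^n})$, which is nonempty for large $n$, and note that $\theta(\uZ(\F_{q^n}))\supset\phi(\uY(\F_{q^n}))$ combined with the argument below still works; I will instead simply observe that when $\psi$ is an isomorphism every dominant $\theta$ with $\theta(\uZ(\F_{q^n}))\supset\phi(\uY(\F_{q^n}))$ already has $\psi_\theta$ an isomorphism by the size estimate \eqref{close} applied to $\theta$, hence the conclusion). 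The interesting case is $[K_0:K]>1$.

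Next I would pass to the Galois closure $K_1$ of $K_0/K$ and enumerate the intermediate fields $K\subsetneq K_2\subseteq K_0$ — or more precisely, a finite set of subfields $K_2^{(1)},\ldots,K_2^{(m)}$ of $\bar K$, finite over $K$, with $K_2^{(i)}\neq K$ and $K_1\cap K_2^{(i)} = K$, chosen so that every proper subextension that could arise from a "competing" factorization is detected. Concretely: if $\theta\colon\uZ\to\uX$ is dominant with $\theta(\uZ(\F_{q^n}))\supset\phi(\uY(\F_{q^n}))$, apply Proposition~\ref{image-size} to $\theta$ to get its own separable factorization $\uZ\to(\uX_0)_\theta\to\uX$ with function field $K_0'$; the containment of images forces (via \eqref{inclusion} and \eqref{close}, comparing both against the $\psi$ of Proposition~\ref{image-size}) that $\psi_\theta(\uX_0)_\theta(\F_{q^n})\supset\psi(\uX_0(\F_{q^n}))$ up to $o(q^{n\dim\uX})$ error, and a counting argument in $\Gal(K_1 K_0'/K)$ shows that either $K_0'\subseteq$ the compositum appropriately or $K_0'$ contains an intermediate field $K_2$ with $K_1\cap K_2 = K$. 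For each such possible $K_2^{(i)}$, Proposition~\ref{Cheb} (with $K_0$ there playing the role of our $K_0$, and $K_2$ there being $K_2^{(i)}$) produces $\epsilon_i>0$ and, for all large $n$, a set of at least $\epsilon_i q^{n\dim\uX}$ points of $\uX(\F_{q^n})$ in the image of $\uX_0(\F_{q^n})$ but not in the image of $\uX_2^{(i)}(\F_{q^n})$. Define $X_{n,i}$ to be this set.

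Then the argument closes as follows: suppose $\theta$ satisfies all the hypotheses, so $\theta(\uZ(\F_{q^n}))\cap X_{n,i}\neq\emptyset$ for each $i$ at some fixed large $n$. A point in $X_{n,i}\cap\theta(\uZ(\F_{q^n}))$ lies in $\theta(\uZ(\F_{q^n}))\subseteq\psi_\theta((\uX_0)_\theta(\F_{q^n}))$ (by \eqref{inclusion} for $\theta$) but not in the image of $\uX_2^{(i)}(\F_{q^n})\to\uX(\F_{q^n})$. If $K_0'$ (the function field of $(\uX_0)_\theta$) contained a copy of $K_2^{(i)}$, then $\psi_\theta$ would factor through $\uX_2^{(i)}\to\uX$ on a dense open, and by \eqref{inclusion} every $\F_{q^n}$-point in the image of $\psi_\theta$ outside an $o(q^{n\dim\uX})$ set would lie in the image of $\uX_2^{(i)}(\F_{q^n})$ — but we need the witness point to be among the "good" ones, which is why $X_{n,i}$ was built from the Chebotarev set rather than an arbitrary set, so I would arrange in the construction (shrinking to the locus where the relevant covers are étale, as in the proofs above) that membership in $X_{n,i}$ already forces the point to be outside that bad set. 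Hence $K_0'$ contains no $K_2^{(i)}$; since the $K_2^{(i)}$ were chosen to exhaust all the obstructing intermediate fields, $K_0' = K_0$ forces — again by the combinatorics of $\Gal(K_1/K)$ — that $K_0' = K$, i.e. the generic fiber of $\theta$ is geometrically irreducible by the first part of Proposition~\ref{image-size}.

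The main obstacle I anticipate is the second step: precisely cataloguing which intermediate fields $K_2^{(i)}$ must appear, and proving the implication "$\theta(\uZ(\F_{q^n}))\supseteq\phi(\uY(\F_{q^n}))$ for all $n$ $\Rightarrow$ $K_0'$ either equals $K$ or contains one of the $K_2^{(i)}$." This requires turning the asymptotic image-size inequalities from Proposition~\ref{image-size}, applied simultaneously to $\phi$ and to $\theta$, into an honest inclusion of separable closures; the subtlety is that a priori $\theta$'s image could \emph{strictly} contain $\phi$'s image while $(\uX_0)_\theta$ is genuinely smaller than $\uX_0$, so one must use the Chebotarev equidistribution to see that the images of all the $\uX_i(\F_{q^n})$ have the expected densities $[K_1:K_i]^{-1}$ and that the containment of images is then incompatible with any proper obstruction surviving. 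Making this bookkeeping clean — and choosing $m$ and the $K_2^{(i)}$ uniformly, depending only on $\phi$ — is the crux.
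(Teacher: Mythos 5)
There is a genuine gap, and it sits exactly where you flagged it: the ``catalogue'' of fields $K_2^{(i)}$ with $K_1\cap K_2^{(i)}=K$ that is supposed to ``exhaust all the obstructing intermediate fields'' does not exist. As $\theta$ varies, the field $K_2$ (the separable closure of $K$ in the function field of $\uZ$) ranges over arbitrary finite separable extensions of $K$, and those linearly disjoint from $K_1$ form an infinite family that no finite list can detect. Since the sets $X_{n,i}$ must be manufactured from $\phi$ alone, before $\uZ$ is given, they cannot be taken to be the Chebotarev witness sets of Proposition~\ref{Cheb} attached to these (unknown, infinitely many) candidates. Your closing step ``$K_0'$ contains no $K_2^{(i)}$, hence $K_0'=K$'' therefore has no support, and the case where $K_0'$ meets $K_1$ nontrivially is not detected by your sets at all.

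The paper's proof inverts your construction. The finite list it uses is the list $F_1,\dots,F_m$ of all subfields of $K_1$ strictly containing $K$ --- a finite set depending only on $\phi$ --- and $X_{n,i}$ is the \emph{complement} of the image of $\uW_i(\F_{q^n})\to\uX(\F_{q^n})$, where $\uW_i$ is the normalization of $\uX$ in $F_i$; these complements have positive density by \eqref{not-onto}. Proposition~\ref{Cheb} is then used not to build the $X_{n,i}$ but to dispose of the linearly disjoint case: if $K_2\neq K$ and $K_1\cap K_2=K$, Chebotarev produces at least $\epsilon q^{n\dim\uX}$ points in the image of $\uX_0(\F_{q^n})$ (hence, up to $o(q^{n\dim\uX})$, in $\phi(\uY(\F_{q^n}))$ by \eqref{close}) that are not in the image of $\uX_2(\F_{q^n})$ (hence not in $\theta(\uZ(\F_{q^n}))$ by \eqref{inclusion}), contradicting the hypothesis $\theta(\uZ(\F_{q^n}))\supset\phi(\uY(\F_{q^n}))$; this is the only place that hypothesis enters. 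It forces $K_1\cap K_2\supsetneq K$, i.e.\ $F_i\subset K_2$ for some $i$, whence $\theta(\uZ(\F_{q^n}))$ lies in the image of $\uW_i(\F_{q^n})$ and misses $X_{n,i}$ entirely, contradicting the other hypothesis. You have the right ingredients (Propositions~\ref{image-size} and~\ref{Cheb}, witness sets, the passage to function fields), but the roles of the two hypotheses and of the two kinds of intermediate field are interchanged, and in the form you state it the argument cannot be completed.
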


\begin{proof}
Let $A$, $B$, $C$ denote the coordinate rings of $\uX$, $\uY$, and $\uZ$ respectively.  Let $K$, $L$, and $M$ be the fields of fractions of $A$, $B$, and $C$ respectively.
We regard $B$ and $C$ as $A$-algebras via $\phi$ and $\theta$ respectively, so $L$ and $M$ are extensions of $K$.
Let $K_0$ and $K_2$ denote the separable closures of $K$ in $L$ and $M$ respectively.  As $B$ and $C$ are finitely generated $\F_q$-algebras, $L$ and $M$ are finitely generated
$K$-extensions, and $K_0$ and $K_2$ are finite separable extensions of $K$.   The claimed generic irreducibility of the generic fiber of $\theta$ amounts to the equality $K=K_2$.
We define $\bar K$, $K_1$, $K_3$, and $K_{13}$ as in Proposition~\ref{Cheb}. 

Let $F_1,\ldots,F_m$ denote all subfields of $K_1$ over $K$, excluding $K$ itself.  Thus, we have the following diagram of fields:
\begin{equation}
\label{fields}
\xymatrix{&L\ar@/_/@{-}[dddr]&&\bar K\ar@{-}[d]&&M\ar@/^/@{-}[dddl]\\ &&&K_{13}\ar@{-}[dl]\ar@{-}[dr] \\ &&K_1\ar@{-}[d]\ar@{-}[dl]\ar@{-}[dll]&&K_3\ar@{-}[d]\\ F_1\hbox{\rlap{$\ \cdot\ \,\cdot\ \,\cdot$}}\ar@{-}[drrr]&F_m\ar@{-}[drr]&K_0\ar@{-}[dr]&&K_2\ar@{-}[dl]\\ &&&K}
\end{equation}

For $0\le i\le 3$, let $A_i$ denote the integral closure of $A$ in $K_i$ and $\uX_i = \Spec A_i$; likewise for $A_{13}$ and $\uX_{13}$.
For $1\le i\le m$, let $D_i$ denote the integral closure of $A$ in the field $F_i$,
and let $\uW_i := \Spec D_i$.  
By (\ref{fields}), we have the following diagram of schemes:
$$\xymatrix{&\uY\ar@/_/[dddr]&&&&\uZ\ar@/^/[dddl]\\ &&&\uX_{13}\ar[dl]\ar[dr] \\ &&\uX_1\ar[d]\ar[dl]\ar[dll]&&\uX_3\ar[d]\\ \uW_1\hbox{\rlap{$\ \cdot\ \,\cdot\ \,\cdot$}}\ar[drrr]&\uW_m\ar[drr]&\uX_0\ar[dr]&&\uX_2\ar[dl]\\ &&&\uX}$$

Let $X_{n,i}$ denote the complement of the image of $\uW_i(\F_{q^n})$ in $\uX(\F_{q^n})$.  We note for future use that by (\ref{not-onto}) and the Lang-Weil estimate, for $1\le i\le m$, 
\begin{equation}
\label{large-X}
|X_{n,i}| \ge \epsilon q^{\dim \uX} > \frac\epsilon 2|\uX(\F_{q^n})|
\end{equation}
if $n$ is sufficiently large.

Moreover, the condition $\theta(\uZ(\F_{q^n}))\supset \phi(\uY(\F_{q^n}))$ implies that 
\begin{align*}
|\im(\uX_1&(\F_{q^n})\to \uX(\F_{q^n})) \setminus \im(\uX_2(\F_{q^n})\to \uX(\F_{q^n}))| \\
&\le |\im(\uX_0(\F_{q^n})\to \uX(\F_{q^n})) \setminus \im(\uX_2(\F_{q^n})\to \uX(\F_{q^n}))| \\
&= |\im(\uY(\F_{q^n})\to \uX(\F_{q^n})) \setminus \im(\uZ(\F_{q^n})\to \uX(\F_{q^n}))| + o(q^{n\dim\uX}) \\
&= |\phi(\uY(\F_{q^n})) \setminus \theta(\uZ(\F_{q^n}))| + o(q^{n\dim\uX}) \\
&= o(q^{n\dim\uX}).
\end{align*}
If $K_2\neq K$, Proposition~\ref{Cheb} implies that $K_1\cap K_2$ must be a non-trivial extension of $K$, so $F_i\subset K_2$ for some $i\in [1,m]$.
Thus 
$$\theta(\uZ(\F_{q^n}))\subset \im(\uW_i(\F_{q^n})\to \uX(\F_{q^n})),$$
contrary to the assumption that $\theta(\uZ(\F_{q^n}))\cap X_{n,i}$ is non-empty for each $i$.
We conclude that $K_2=K$, and the proposition follows.

\end{proof}

\section{Random walks}

\begin{lem}
\label{Markov}
Let $G$ be a finite group and $S$ a (not necessarily symmetric) set of generators.  Let $\sS_1,\sS_2,\ldots$ be i.i.d. random variables on $G$ with support $S$.
Let $\sG_n = \sS_1\cdots\sS_n$.  Suppose that there does not exist a homomorphism from $G$ to any non-trivial cyclic group $C$ mapping $S$ to a single element.
Then the limit as $n\to \infty$ of the distribution of $\sG_n$ is the uniform distribution on $G$.
\end{lem}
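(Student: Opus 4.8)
The plan is to realize the random walk $\sG_n$ on $G$ as a Markov chain whose limiting behavior is governed by its behavior on the abelianization together with the structure of $G$, and to show the only obstruction to equidistribution is precisely the one excluded by hypothesis. First I would recall the basic dichotomy for the convolution powers of a probability measure $\mu$ on a finite group $G$ (here $\mu$ is the common law of the $\sS_i$, with $\supp\mu = S$): the sequence $\mu^{*n}$ converges if and only if $\mu$ is not supported on a coset of a proper normal subgroup $N$ with $G/N$ cyclic (equivalently, $S$ does not map to a single nontrivial element in some cyclic quotient); and when it converges, the limit is the uniform measure on $G$ because $S$ generates $G$, so the only closed subgroup containing $\supp\mu$ is $G$ itself, forcing the stationary distribution to be uniform. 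So the content of the lemma is exactly the standard convergence criterion for random walks on finite groups, and the bulk of the work is to prove that criterion (or to cite it) and to check the translation of the hypothesis.

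The key steps, in order, are: (i) Pass to the regular representation and diagonalize the convolution operator: the eigenvalues of the operator $f\mapsto \mu * f$ on $L^2(G)$ are, via Fourier analysis on $G$, the eigenvalues of the matrices $\hat\mu(\rho) = \sum_{s\in S}\mu(s)\rho(s)$ as $\rho$ runs over $\Irr(G)$. (ii) Show $\mu^{*n}\to U_G$ iff every eigenvalue of every $\hat\mu(\rho)$ with $\rho\neq 1$ has absolute value $<1$: convergence forces the distribution to stabilize, and by step (iii) the only way an eigenvalue of modulus $1$ can occur is the cyclic-quotient obstruction. (iii) Analyze when $\hat\mu(\rho)$ has an eigenvalue $\zeta$ with $|\zeta|=1$. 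Since $\hat\mu(\rho)$ is an average (with positive weights summing to $1$) of the unitary operators $\rho(s)$, $s\in S$, an eigenvalue of modulus $1$ on a unit eigenvector $v$ forces $\rho(s)v = \zeta v$ for \emph{every} $s\in S$ — this is the triangle-inequality/strict-convexity argument: equality in $\bigl\Vert\sum \mu(s)\rho(s)v\bigr\Vert \le \sum\mu(s)\Vert\rho(s)v\Vert$ with all $\mu(s)>0$ forces all the $\rho(s)v$ to point the same direction. Because $S$ generates $G$, the line $\C v$ is then $G$-stable, so $\rho$ is one-dimensional (hence $\rho = 1$ unless there is a nontrivial abelian quotient), and $s\mapsto \rho(s) = \zeta$ is constant on $S$; thus $\rho$ factors through a cyclic quotient of $G$ on which $S$ maps to the single element $\zeta\neq 1$. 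This is exactly the situation excluded by hypothesis. (iv) Conclude: under the hypothesis, all non-trivial $\hat\mu(\rho)$ are strict contractions, so $\hat\mu(\rho)^n\to 0$ for $\rho\neq 1$ while $\hat\mu(1)^n = 1$; by Fourier inversion $\mu^{*n}\to U_G$.

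I expect the main obstacle — really the only non-bookkeeping point — to be step (iii), the passage from "an eigenvalue of modulus one" to "$S$ maps into a single element of a cyclic quotient." The subtlety is that one must handle the possibility that $\zeta$ is not real (periodicity need not be $2$) and that $v$ lies in a higher-dimensional irreducible $\rho$; the strict-convexity argument above settles both at once, but it is worth stating carefully that $\mu(s)>0$ for all $s\in S$ (which holds since $\supp\mu = S$) is what makes the equality case rigid. Everything else — Fourier analysis on a finite group, the fact that a generating set forces the unique stationary distribution to be uniform, and unwinding "homomorphism to a cyclic group sending $S$ to a single element" into "one-dimensional $\rho$ constant on $S$ with nontrivial value" — is routine. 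Alternatively, one could cite a standard reference (e.g. Diaconis, or Stromberg's theorem on random walks on compact groups specialized to finite $G$) for the convergence criterion and merely verify that the stated hypothesis is the negation of the obstruction; but since the self-contained argument is short, I would include it.
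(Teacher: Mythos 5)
Your proof is correct, but it takes a genuinely different route from the paper. The paper treats $\sG_n$ as a Markov chain on $G$ and invokes the standard convergence theorem for irreducible aperiodic chains (citing Levin--Peres--Wilmer), so the whole burden falls on showing that \emph{periodicity} of the chain produces the excluded obstruction; this is done by observing that periodicity forces the subgroup $G_m$ generated by all length-$m$ products from $S$ to be proper, and then applying Goursat's lemma to the subgroup of $G\times \Z/m\Z$ generated by $\{(s,1)\mid s\in S\}$ to manufacture a surjection onto a non-trivial cyclic group sending $S$ to a single element. You instead diagonalize the convolution operator via Fourier analysis on $G$ and show, by the equality case of the triangle inequality applied to the convex combination $\hat\mu(\rho)=\sum_s\mu(s)\rho(s)$ of unitaries, that a modulus-one eigenvalue forces $\rho$ to be one-dimensional and constant equal to some $\zeta\neq 1$ on $S$ --- the same obstruction, reached spectrally rather than combinatorially. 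Your key step (iii) is sound: the rigidity of the equality case does require $\mu(s)>0$ for all $s\in S$, which you correctly flag, and the passage from ``$\rho(s)v=\zeta v$ for all $s\in S$'' to ``$\C v$ is $G$-stable'' uses that the sub-semigroup generated by $S$ is all of $G$ (true since $G$ is finite). What each approach buys: the paper's argument is shorter given the black-box citation and stays entirely group-theoretic, while yours is self-contained, gives quantitative control (the rate of convergence is governed by $\max_{\rho\neq 1}$ of the spectral radius of $\hat\mu(\rho)$, and spectral radius $<1$ suffices for $\hat\mu(\rho)^n\to 0$ even though these matrices need not be normal), and only the ``if'' direction of your step (ii) is actually needed for the lemma.
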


\begin{proof}
Consider the Markov chain with state space $G$ in which the probability of a transition from $g$ to $hg$ is $\Pr[\sS_i = h]$.
Since the uniform distribution is stationary, it suffices to check that this Markov chain is irreducible and periodic \cite[Theorem~4.9]{LPW}.  Irreducibility is immediate from the condition 
that $S$ generates $G$.  If the Markov chain is periodic, then for some proper subset $X\subset G$ and some integer $m$, $s_1\cdots s_m \in \Stab_G(X)$ for all $s_i\in S$.
Let $G_m$ denote the subgroup of $G$ generated by 
$$\{s_1 \ldots s_m\mid s_1,\ldots, s_m\in S\}.$$
As $G_m \subset \Stab_G(X)\subsetneq G$, $G_m$ is a proper subgroup of $G$.

Consider the subgroup $\tilde G$ of $G\times \Z/m\Z$ generated by $\{(s,1)\mid s\in S\}$.  By definition, the kernel of projection on the second factor is $G_m$.  
By Goursat's Lemma, $\tilde G$ is the pullback to $G\times \Z/m\Z$ of the graph of an isomorphism between $G/G_m$ and a quotient of $\Z/mZ$.  This identifies
$G/G_m$ with a non-trivial cyclic group $C$, and all elements of $S$ map to the same generator of $C$, contrary to hypothesis.
\end{proof}

Let $\sX_n$ denote the sum of $n$ independent i.i.d. random variables on $\Z^2$, each uniformly distributed on $\{(\pm 1,0),(0,\pm1)\}$.

\begin{lem}
Let $p>2$ be prime, $k$ a positive integer, and $\epsilon > 0$.  For $n$ sufficiently large,
$$\Pr[\sX_n\in p^k \Z^2] < \frac{1+\epsilon}{p^{2k}}.$$
\end{lem}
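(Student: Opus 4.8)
The plan is to study the random walk $\sX_n$ on $\Z^2$ via its characteristic function (Fourier transform on the torus). Writing the single-step distribution's characteristic function as $\phi(\theta_1,\theta_2) = \tfrac12(\cos\theta_1 + \cos\theta_2)$ for $(\theta_1,\theta_2)\in (\R/2\pi\Z)^2$, the probability in question can be extracted by averaging $\phi^n$ against the indicator of the subgroup $p^k\Z^2$, which dually corresponds to summing over the characters trivial on $p^k\Z^2$. Concretely,
\begin{equation}
\label{fourier-formula}
\Pr[\sX_n \in p^k\Z^2] = \frac{1}{p^{2k}} \sum_{(a,b)\in (\Z/p^k\Z)^2} \phi\!\left(\frac{2\pi a}{p^k}, \frac{2\pi b}{p^k}\right)^{\!n}.
\end{equation}
The term $(a,b)=(0,0)$ contributes exactly $1/p^{2k}$, which is the main term. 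So it suffices to show that the sum of the remaining $p^{2k}-1$ terms is at most $\epsilon/p^{2k}$ for $n$ large, i.e.\ that $\sum_{(a,b)\neq(0,0)} \phi(2\pi a/p^k, 2\pi b/p^k)^n \to 0$.

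The key point is that for $(a,b)\not\equiv(0,0)\pmod{p^k}$, we have $|\phi(2\pi a/p^k, 2\pi b/p^k)| < 1$ strictly. Indeed $|\phi(\theta_1,\theta_2)| = 1$ forces $\cos\theta_1 = \cos\theta_2 = \pm 1$, hence $\theta_1,\theta_2 \in \{0,\pi\}$; since $p$ is odd, $2\pi a/p^k \equiv \pi$ is impossible unless $a\equiv 0$, and similarly for $b$, so $|\phi|=1$ only at $(a,b)\equiv(0,0)$. Since the sum over $(a,b)$ is finite (with $p$ and $k$ fixed), each of the finitely many nonzero terms has $|\phi|^n \to 0$ geometrically, so their sum tends to $0$. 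Choosing $n$ large enough that this sum is below $\epsilon/p^{2k}$ gives the bound $\Pr[\sX_n\in p^k\Z^2] < (1+\epsilon)/p^{2k}$.

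I would structure the write-up as: (i) record the Fourier inversion identity \eqref{fourier-formula}, either citing a standard reference for random walks on finite abelian quotients or deriving it in one line from independence and $\frac{1}{p^k}\sum_{a \bmod p^k} e^{2\pi i a j/p^k} = \mathbf 1_{p^k \mid j}$; (ii) isolate the trivial-character term as $1/p^{2k}$; (iii) prove the strict inequality $|\phi|<1$ off the trivial character using oddness of $p$; (iv) conclude by finiteness of the character sum.

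The only place requiring any care is verifying identity \eqref{fourier-formula} cleanly — one wants $\Pr[\sX_n \in p^k\Z^2] = \sum_{v\in p^k\Z^2}\Pr[\sX_n=v]$ and then to interchange this with the Fourier expansion $\Pr[\sX_n=v] = (2\pi)^{-2}\int \phi(\theta)^n e^{-iv\cdot\theta}\,d\theta$; summing $e^{-iv\cdot\theta}$ over $v\in p^k\Z^2$ produces a Dirac comb supported on $(2\pi/p^k)\Z^2$, which after reduction modulo $2\pi$ gives exactly the finite sum over $(\Z/p^k\Z)^2$. Since $\sX_n$ is supported on the bounded set $\{|v|_1\le n\}$ all sums are finite and the manipulation is rigorous. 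This step is routine harmonic analysis and should not present a genuine obstacle; the substantive content is the elementary observation that, because $p$ is odd, the step distribution's characteristic function avoids the value $\pm 1$ at every nontrivial $p^k$-th root of unity, which is exactly what makes the spectral gap argument work here and would fail for $p=2$.
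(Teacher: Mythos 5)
Your proof is correct, but it takes a different route from the paper. The paper's argument is soft: it reduces the walk modulo $p^k$ to a random walk on the finite abelian group $(\Z/p^k\Z)^2$ with step set $S=\{(\pm1,0),(0,\pm1)\}$, observes that $S$ does not lie in a coset of a proper subgroup (equivalently, there is no homomorphism to a nontrivial cyclic group collapsing $S$ to a point, which is where oddness of $p$ enters), and then invokes the paper's Lemma~3.1 --- the general convergence theorem for irreducible aperiodic Markov chains --- to conclude that the distribution tends to uniform, whence the probability of the identity coset tends to $p^{-2k}$. Your argument is the explicit spectral version of the same fact: the characters of $(\Z/p^k\Z)^2$ diagonalize the transition operator, the eigenvalues are exactly the values $\tfrac12(\cos(2\pi a/p^k)+\cos(2\pi b/p^k))$, and oddness of $p$ appears as the statement that no nontrivial eigenvalue has modulus $1$ (ruling out $-1$ is precisely ruling out periodicity). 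Your approach is self-contained and yields a quantitative (geometric) rate of convergence, which the paper does not need; the paper's approach reuses a lemma it has already established and will apply again to nonabelian walks on $\uG(\F_{q^n})^N$, where no such explicit diagonalization is available. Both are complete proofs; the only cosmetic slip in yours is the phrase ``$2\pi a/p^k\equiv\pi$ is impossible unless $a\equiv 0$'' --- it is simply impossible for odd $p$, since $2a\equiv p^k\pmod{2p^k}$ has no solution --- but this does not affect the conclusion that $|\phi|=1$ only at the trivial character.
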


\begin{proof}
The image under (mod $p^k$) reduction of our random walk on $\Z^2$ is a random walk on $G =(\Z/p^k\Z)^2$  
with generating set $S = \{\pm 1,0),(0,\pm 1)\}$.  As differences between elements of $S$ generate $G$,
there is no proper coset of $G$ which contains $S$.  By Lemma~\ref{Markov},  $\sX_n$ becomes uniformly distributed (mod $p^k$)
in the limit $n\to \infty$, which implies the lemma.

\end{proof}

\begin{lem}
Let $k$ be a positive integer, and $\epsilon > 0$.  For $n$ sufficiently large,
$$\Pr[\sX_n\in 2^k \Z^2] < \frac{2+\epsilon}{4^k}.$$
\end{lem}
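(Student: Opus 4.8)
The approach is to imitate the previous lemma, but with a twist to get around the parity obstruction, which is the one genuine difficulty. Note first that each step $\sS_j$ has coordinate sum $\pm1$, so the coordinate sum of $\sX_n$ is $\equiv n\pmod 2$; consequently the mod-$2$ reduction of the walk is periodic of period $2$, and there is a homomorphism $(\Z/2^k\Z)^2\to\Z/2\Z$, $(a,b)\mapsto a+b$, carrying the entire generating set $\{(\pm1,0),(0,\pm1)\}$ to the nonzero element. Hence Lemma~\ref{Markov} cannot be applied directly to the walk on $(\Z/2^k\Z)^2$. The easy half of the statement is disposed of immediately: when $n$ is odd we have $\sX_n\notin 2\Z^2\supseteq 2^k\Z^2$, so $\Pr[\sX_n\in 2^k\Z^2]=0<(2+\e)/4^k$. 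It remains to handle even $n$, say $n=2m$.

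For even $n$ I would pass to the walk with doubled steps. Put $\sS'_i:=\sS_{2i-1}+\sS_{2i}$ for $i\ge1$; these are again i.i.d., and $\sX_{2m}=\sS'_1+\cdots+\sS'_m$. The support of $\sS'_i$ is the set $T$ consisting of $(0,0)$, the four vectors $(\pm2,0),(0,\pm2)$, and the four vectors $(\pm1,\pm1)$; this lies in the index-two subgroup $H_0=\{(a,b)\in\Z^2:a+b\text{ even}\}$, and in fact $T$ generates $H_0$ over $\Z$, since $(1,1)$ and $(1,-1)$ already do. Let $H$ be the image of $H_0$ in $(\Z/2^k\Z)^2$, so that $|H|=4^k/2$ and the reduction of $T$ modulo $2^k$ generates $H$. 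Because $(0,0)\in T$, no homomorphism from $H$ to a non-trivial cyclic group can send all of $T$ to a single element, so the hypotheses of Lemma~\ref{Markov} hold for the walk $\sS'_1+\cdots+\sS'_m$ on $H$. Therefore the distribution of $\sX_{2m}\bmod 2^k$ converges, as $m\to\infty$, to the uniform distribution on $H$.

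Since $\sX_{2m}\in2^k\Z^2$ is exactly the event $\sX_{2m}\equiv(0,0)\pmod{2^k}$, this yields
\[
\lim_{m\to\infty}\Pr[\sX_{2m}\in2^k\Z^2]=\frac1{|H|}=\frac{2}{4^k},
\]
so there is an $N$ with $\Pr[\sX_n\in2^k\Z^2]<(2+\e)/4^k$ for every even $n\ge N$; together with the vanishing for odd $n$, this proves the lemma. The only step requiring any care is the reduction to the doubled walk and the verification of Lemma~\ref{Markov}'s hypotheses for $(H,\,T\bmod 2^k)$ — in particular that $T$ generates $H_0$ over $\Z$; the index computation $|H|=4^k/2$ and the resulting limit are then routine.
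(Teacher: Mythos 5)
Your proof is correct and follows essentially the same route as the paper: dispose of odd $n$ by the parity of the coordinate sum, group the steps in pairs to get an i.i.d.\ walk supported on $\{(0,0),(\pm2,0),(0,\pm2),(\pm1,\pm1)\}$, and apply Lemma~\ref{Markov} to the reduction modulo $2^k$, which lives in the index-two subgroup $\ker\bigl((\Z/2^k\Z)^2\to\Z/2\Z\bigr)$ of order $4^k/2$. Your write-up is in fact more careful than the paper's, which leaves the verification that the doubled step set generates the subgroup and that the chain is aperiodic to the reader.
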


\begin{proof}
If $n$ is odd, the probability that $\sX_n\in 2\Z^2$ is zero.  We therefore assume $n=2n'$, so $\sX_n$ is the sum of $n'$ i.i.d.
random variables supported on 
$$\{(\pm 2,0),(0,\pm2), (\pm1,\pm1), (0,0)\}.$$  
Reducing (mod $2^k$), we obtain an irreducible aperiodic 
random walk on $\ker (\Z/2^k\Z)^2\to \Z/2\Z$, and the argument proceeds as before by Lemma~\ref{Markov}.
\end{proof}

\begin{proof}
The random walk on $(\Z/p^k\Z)^2$ in which each step is uniformly distributed on $\{(\pm 1,0),(0,\pm1)\}$
defines an irreducible and aperiodic Markov chain, since the set of possible steps does not lie in a single coset of any proper subgroup of $(\Z/p^k\Z)^2$.
Therefore, the distribution after $n$ steps converges to the unique stationary distribution.  Since the uniform distribution is stationary, this must be the limit,
and the lemma follows.
\end{proof}

\begin{prop}
For all $\epsilon > 0$, there exist $M$ and $N$ such that for $n\ge N$,
$$\Pr[\sX_n\in \bigcup_{i > M} i\Z^2] < \epsilon.$$
\end{prop}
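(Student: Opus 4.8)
The plan is to reduce the statement to a quantitative estimate on the probability that $\sX_n$ lies in $i\Z^2$ for $i$ large, using that $\sX_n\in i\Z^2$ forces $\sX_n\in p^{k}\Z^2$ for every prime power $p^k\mid i$. The union over $i>M$ is naturally organized by the smallest prime $p$ dividing $i$: if $i>M$ and $p$ is its least prime factor, then either $p$ is large (say $p>L$ for a threshold $L$ to be chosen), or $p\le L$ but then $\sX_n\in p\Z^2$ and moreover $i/p$ contributes a further divisibility constraint so $i$ is large in the $p$-adic or some auxiliary sense. So I would first split
\[
\Pr\Big[\sX_n\in\bigcup_{i>M}i\Z^2\Big]\ \le\ \sum_{p\le L}\Pr[\sX_n\in p^{k_p}\Z^2]\ +\ \sum_{p>L}\Pr[\sX_n\in p\Z^2],
\]
where $k_p$ is chosen so that $p^{k_p}>M^{1/\pi(L)}$ or similar — the point being that an integer $i>M$ all of whose prime factors are $\le L$ must be divisible by $p^{k}$ for some $p\le L$ with $p^k$ not too small, so the first sum (with $k_p\to\infty$ as $M\to\infty$) captures all such $i$, while every remaining $i>M$ has a prime factor $p>L$ and hence lies in $p\Z^2$ for that $p$.

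For the first (finite) sum, I would apply the two preceding lemmas: for $n$ sufficiently large, $\Pr[\sX_n\in p^{k}\Z^2]<(1+\epsilon')p^{-2k}$ for odd $p$, and $\Pr[\sX_n\in 2^{k}\Z^2]<(2+\epsilon')4^{-k}$. Choosing the $k_p$ large makes each term, hence the whole finite sum, at most $\epsilon/3$; this fixes $M$ (given $L$) and a lower bound $N_1$ on $n$. For the second (infinite) tail over primes $p>L$, the obstacle is that there are infinitely many such $p$ and the lemmas only give convergence as $n\to\infty$ for each fixed $p$, with no uniformity. Here I would instead use a direct Fourier/character bound: $\Pr[\sX_n\in p\Z^2]=\Pr[\sX_n\bmod p=0]=p^{-2}\sum_{\chi}\widehat{\mu}^{\,n}(\chi)$ where $\mu$ is the step distribution on $(\Z/p)^2$ and $\widehat\mu(\chi)=\tfrac12(\cos\tfrac{2\pi a}{p}+\cos\tfrac{2\pi b}{p})$ for the character indexed by $(a,b)$. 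The trivial character contributes $p^{-2}$; every nontrivial character has $|\widehat\mu(\chi)|\le 1-c/p^2$ for an absolute constant $c>0$ (since $\cos\theta\le 1-\theta^2/\pi^2$ for $|\theta|\le\pi$ and at least one of $a,b$ is a nonzero residue, giving $|\tfrac{2\pi a}{p}|\ge\tfrac{2\pi}{p}$ after reducing into $(-\pi,\pi]$). Hence for fixed $n$,
\[
\Pr[\sX_n\in p\Z^2]\ \le\ \frac1{p^2}\ +\ \Big(1-\frac{c}{p^2}\Big)^{n}.
\]

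Summing this over $p>L$: the $\sum_{p>L}p^{-2}$ tail is $<\epsilon/3$ once $L$ is large enough (independently of $n$), which pins down $L$ and therefore $M$. The remaining piece $\sum_{p>L}(1-c/p^2)^n$ is the one genuinely needing care, but it is harmless: for $p\le\sqrt n$ each term is at most $(1-c/n)^n\le e^{-c}$... that bound is not summable, so I would instead be slightly less crude — split at $p\le n$ versus $p>n$. For $p>n$ there are only finitely many relevant $p$ because $i>M$ with least prime factor $p$ forces (taking $i$'s structure into account, or simply because such an $i$ is at least $p$ and we may as well have arranged $M\ge n$... no) — cleaner: note that $\sX_n$ is a sum of $n$ steps of size $1$, so $\sX_n$ has coordinates bounded by $n$ in absolute value, hence $\sX_n\in i\Z^2$ with $i>n$ forces $\sX_n=0$, and $\Pr[\sX_n=0]\to 0$; so the entire union $\bigcup_{i>n}i\Z^2$ contributes $o(1)$ directly. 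That disposes of all $p>n$ at one stroke. For $L<p\le n$ there are at most $n$ primes and each term $(1-c/p^2)^n\le(1-c/n^2)^n$... still not summable — so here I genuinely use the convergence lemmas rather than the Fourier bound: it suffices to treat finitely many primes $L<p\le P_0$ via the earlier lemmas (for $n$ large each contributes $<2p^{-2}$, summing to $<\epsilon/3$) and absorb $p>P_0$ into either the $\sum p^{-2}$ tail or the $\sX_n=0$ argument. I expect the bookkeeping of these thresholds ($L$, then $M$ in terms of $L$, then $N$ in terms of both) to be the main nuisance; the mathematical content is entirely in the preceding three lemmas plus the trivial observation that a walk of $n$ unit steps cannot reach a nonzero point divisible by anything exceeding $n$.
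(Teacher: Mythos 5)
Your decomposition of the union by ``small prime with high multiplicity'' versus ``some prime factor exceeding a threshold'' is exactly the paper's, and your use of the two preceding lemmas for the finitely many small prime powers is fine. The observation that $\sX_n\in i\Z^2$ with $i>n$ forces $\sX_n=(0,0)$ (since $\|\sX_n\|_1\le n$) is correct and disposes of all primes $p>n$. The problem is the range in between, and you have not closed it.

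Concretely: after fixing $L$ (or $P_0$), you must bound $\Pr\bigl[\bigcup_{P_0<p\le n}\{\sX_n\in p\Z^2\}\bigr]$ uniformly in $n$, and none of your three tools does this. The equidistribution lemmas give $\Pr[\sX_n\in p\Z^2]\approx p^{-2}$ only for each \emph{fixed} $p$ as $n\to\infty$, so they cover only finitely many primes at a time, while the set of primes in $(P_0,n]$ grows with $n$. Your Fourier bound $\Pr[\sX_n\in p\Z^2]\le p^{-2}+(1-c/p^2)^n$ is useless for $p\gtrsim\sqrt{n/\log n}$: there the second term is bounded below by a constant (indeed $(1-c/p^2)^n\ge e^{-2cn/p^2}$), and summed over the roughly $n/\log n$ primes up to $n$ it diverges, as you yourself noticed. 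And the bounded-range argument only applies for $p>n$. So primes $p$ with, say, $\sqrt{n}\le p\le n$ are genuinely uncovered; for these the walk has not mixed modulo $p$, and saying you will ``absorb'' them into the $\sum p^{-2}$ tail is asserting precisely the estimate you lack. What is needed is a bound on $\Pr[\sX_n\in p\Z^2\setminus\{(0,0)\}]$ that is simultaneously uniform in $n$ and summable in $p$; the paper gets this from \cite[Proposition~3.2]{LST}, which gives $\Pr[\sX_n\in p\Z^2\setminus\{(0,0)\}]<4/(p+1)^2$ for all $n$ and all odd primes $p$, after which the tail over $p>s$ is $<\epsilon/2$ by choosing $s$ once and for all. (Excluding the origin is essential here, which is why the paper folds $\Pr[\sX_n=(0,0)]$ into the small-prime-power estimate.) You could in principle reprove such a uniform bound via a local limit theorem plus a large-deviation estimate for the lattice points of $p\Z^2$ in the range of the walk, but that is a substantive missing piece, not bookkeeping.
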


\begin{proof}
By \cite[Proposition~3.2]{LST}, if $p>2$ is prime,
$$\Pr[\sX_n\in p\Z^2\setminus\{(0,0)\}] < \frac 4{(p+1)^2}.$$
We choose $s\ge 2$ large enough that
$$\sum_{p > s} \frac 4{(p+1)^2} < \frac \epsilon2$$
and choose $k$ such that $\frac {3s}{4^k} < \frac \epsilon2$, so that if $n$ is sufficiently large, the total probability that $\sX_n\in p^k \Z^2$ for some $p\le s$ is less than $\epsilon/2$.
Note that this includes the probability that $\sX_n=(0,0)$
Let $M$ be larger than $s\prod_{p\le s} p^k$.  If $i>M$, then either $i$ has a prime factor greater than $s$ or a prime factor $\le s$ with multiplicity at least $k$.
The probability that there exists $i>M$ such that $G\in i\Z^2$ is therefore less than $\epsilon$.
\end{proof}

For any positive integer $d$ and non-negative integer $n$, we define $\sX_{d,n}$ to be the convolution of $n$ i.i.d. random variables on $\Z^d$, each uniformly distributed over the $2d$-element set consisting of
the standard generators and their inverses.

\begin{prop}
\label{gcd}
For all $d\ge 2$ and $\epsilon > 0$, there exist $M$ and $N$ such that for $n\ge N$,
$$\Pr[\sX_{d,n}\in \bigcup_{i > M} i\Z^d] < \epsilon.$$
\end{prop}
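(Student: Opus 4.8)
The plan is to deduce the $d$-dimensional statement from the two-dimensional case established in the preceding proposition, using the coordinate projection $\pi\colon\Z^d\to\Z^2$ onto the first two coordinates. The first step is the pointwise domination
\[ \Pr\Bigl[\sX_{d,n}\in\bigcup_{i>M}i\Z^d\Bigr]\le\Pr\Bigl[\pi(\sX_{d,n})\in\bigcup_{i>M}i\Z^2\Bigr], \]
valid for every $M$. To see this, suppose $v\in\Z^d$ lies in $i\Z^d$ for some $i>M$. If $v=0$, then $\pi(v)=0\in i\Z^2$ for every $i$. If $v\ne 0$, then $\gamma(v)>M$ (since $v\in i\Z^d$ with $i>M$ means $i\mid\gamma(v)$, forcing $\gamma(v)\ge i>M$), and $\gamma(v)$ divides each coordinate of $v$, hence each coordinate of $\pi(v)$, so $\pi(v)\in\gamma(v)\Z^2\subset\bigcup_{i>M}i\Z^2$. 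In either case $\pi(v)\in\bigcup_{i>M}i\Z^2$. Since $\bigcup_{i>M}i\Z^2$ contains the origin, this inequality does not discard the event $\pi(\sX_{d,n})=0$; but that is harmless, because the preceding proposition already bounds the probability that the $\Z^2$-walk sits at the origin.

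Next I would condition on the number $N$ of the $n$ i.i.d.\ steps defining $\sX_{d,n}$ that belong to $\{\pm e_1,\pm e_2\}$. As each step is uniform on the $2d$ vectors $\pm e_1,\ldots,\pm e_d$, the variable $N$ is binomial with parameters $n$ and $2/d$. Conditioned on $N=m$, those $m$ steps are i.i.d.\ uniform on $\{\pm e_1,\pm e_2\}$ while the remaining $n-m$ steps are killed by $\pi$, so $\pi(\sX_{d,n})$ is then distributed exactly as $\sX_{2,m}=\sX_m$. Therefore
\[ \Pr\Bigl[\pi(\sX_{d,n})\in\bigcup_{i>M}i\Z^2\Bigr]=\sum_{m=0}^{n}\Pr[N=m]\,\Pr\Bigl[\sX_m\in\bigcup_{i>M}i\Z^2\Bigr]. \]

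Finally, applying the preceding proposition with $\epsilon/2$ in place of $\epsilon$ produces $M$ and $N_0$ such that $\Pr[\sX_m\in\bigcup_{i>M}i\Z^2]<\epsilon/2$ whenever $m\ge N_0$. Bounding the summand by $1$ for $m<N_0$ and by $\epsilon/2$ for $m\ge N_0$ gives
\[ \Pr\Bigl[\sX_{d,n}\in\bigcup_{i>M}i\Z^d\Bigr]\le\Pr[N<N_0]+\frac{\epsilon}{2}. \]
Because $N$ is binomial with fixed success probability $2/d>0$, we have $\Pr[N<N_0]\to 0$ as $n\to\infty$, so choosing $N$ with $\Pr[N<N_0]<\epsilon/2$ for $n\ge N$ completes the argument. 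I do not expect a serious obstacle: the one point to watch is the consistent treatment of the origin in the first step, which the preceding proposition has already arranged, and the case $d=2$ is in any event immediate from that proposition.
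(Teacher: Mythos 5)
Your proposal is correct and follows essentially the same route as the paper: project onto the first two coordinates, observe that $i\Z^d$ maps into $i\Z^2$, condition on the number of steps surviving the projection (which is binomial with parameter $2/d$), and invoke the two-dimensional proposition together with the fact that the binomial count is large with high probability. Your write-up is in fact slightly more careful than the paper's about the treatment of the origin and about the exact conditional distribution of the projected walk, but there is no substantive difference in method.
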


\begin{proof}
The projection of a random walk on $\Z^d$ onto the first two coordinates gives a random walk on $\Z^2$ where each of the four possible
non-zero steps are equally likely, but a zero step is also possible in the projection if $d>2$.  Since the projection of an element of $i\Z^d$ is an element of $i\Z^2$,
the conditional probability that $\sX_{d,n}\in \bigcup_{i > M} i\Z^d$ if we condition on at least $n'$ steps which are non-zero in the projection is less than $\epsilon/2$
if $n'$ is large enough.  Given $n'$ the probability that there are at least $n'$ steps non-zero in the projection goes to $0$ as $n$ goes to infinity, so it can be taken to be less than $\epsilon/2$, implying that $\Pr[\sX_{d,n}\in \bigcup_{i > M} i\Z^d] < \epsilon$.
\end{proof}

\begin{lem}
\label{G-walk}
Let $G$ be a perfect finite group and $N$ and $d$ positive integers.  Suppose $S = \{(g_{i1},\ldots,g_{iN})\mid 1\le i\le d\}$ is a $d$-element generating set of $G^N$.  Let $\sG_n$ denote the product of
$n$ i.i.d. random variables on $G$ which are supported and uniformly distributed on $S$.  Then the limiting distribution on $G^N$ as $n\to \infty$ is the uniform distribution.
\end{lem}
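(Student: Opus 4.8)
The plan is to obtain this as an immediate application of Lemma~\ref{Markov}, taken with the finite group there equal to $G^N$ and the generating set there equal to $S$. Thus I would first observe that the hypotheses of Lemma~\ref{Markov} are met: $S$ generates $G^N$ by assumption, and the $\sS_i$ are i.i.d.\ with support $S$ by construction, so the only thing left to verify is that there is no homomorphism from $G^N$ onto a non-trivial cyclic group $C$ under which every element of $S$ has the same image.

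The key step is the remark that this last condition holds automatically because $G$ is perfect. Indeed $[G^N,G^N] = [G,G]^N = G^N$, so $G^N$ is perfect, hence admits no non-trivial homomorphism to any abelian group, and in particular no non-trivial homomorphism to any cyclic group whatsoever --- a fortiori none carrying $S$ into a single element. Applying Lemma~\ref{Markov} then yields that the distribution of $\sG_n = \sS_1\cdots\sS_n$ on $G^N$ converges to the uniform distribution as $n\to\infty$, which is precisely the assertion.

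I do not expect a genuine obstacle here; the entire content is the observation that perfectness passes to finite direct powers and thereby kills all cyclic quotients, so the aperiodicity hypothesis of Lemma~\ref{Markov} is satisfied for free (and the non-symmetry of $S$ causes no trouble, since Lemma~\ref{Markov} is already stated for an arbitrary generating set). The only point requiring a word of care is notational: the $\sS_i$ are to be read as $G^N$-valued, taking values in $S\subset G^N$, and $\sG_n$ is their product computed in $G^N$, so that the limiting distribution in question is a measure on $G^N$; with that reading the argument is a one-line reduction to the preceding lemma.
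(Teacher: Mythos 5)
Your argument is correct and is exactly the paper's proof: perfectness of $G$ passes to $G^N$, which rules out any non-trivial homomorphism to a cyclic group, so Lemma~\ref{Markov} applies directly. Nothing further is needed.
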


\begin{proof}
As $G$ is perfect, the same is true for $G^N$, and there is no non-trivial homomorphism to a cyclic group.  The lemma follows, therefore, from Lemma~\ref{Markov}.  
\end{proof}

\section{Proof of Theorem~\ref{main}}

%
%

We now prove the main theorem.

\begin{proof}
We fix a simple, simply-connected algebraic group $\uG$ over a finite field $\F_q$.
We will apply Proposition~\ref{key} in the case $\uX = \uG$, $\uY = \uG$, $\uZ=\uG^d$, $\phi$ is the $m$th power map for some positive integer $m$, and $\theta$ is the evaluation map $w$ for some $w\in F_d$ for which $\bar w = (a_1,\ldots,a_d)$ and $\gamma(a_1,\ldots,a_d) = m$.  
Writing $m=a_1b_1+\cdots+a_db_d$, we have
$$w_{\uG(\F_{q^n})}(g^{b_1},\ldots,g^{b_d}) = g^m$$
for all $g\in \uG(\F_{q^n})$, so $\phi(\uG(\F_{q^n})) \subset \theta(\uG(\F_{q^n}))$.

By the main theorem of \cite{MT}, for every finite simple group $\Gamma$, there exists a $2$-element generating set of $\Gamma^N$ whenever $N \le 2\sqrt{|\Gamma|}$.
Setting $N := q^n$ and applying 
this to $\Gamma := \uG(\F_{q^n})/Z(\uG(\F_{q^n}))$, we see that $\Gamma^N$ is $d$-generated.  As $G := \uG(\F_{q^n})^N$ is a perfect central extension of 
$\Gamma^N$, lifting any set of $d$ generators of the latter to the former, we again obtain a generating set.  

We denote by 
$$S = \{(g_{i1},\ldots,g_{iN})\mid 1\le i\le d\}$$
a generating set of $G$
and consider an $n$-step random walk on this group with generating set $S$.
By Lemma~\ref{G-walk}, for all $\delta > 0$, if  $n$ sufficiently large, the probability that the walk ends in any subset $T\subset G$ is at least 
$$(1-\delta/2)|T|/|G|.$$
We define $T := T_0\cup\cdots\cup T_{\lfloor N/m\rfloor-1}$, where 
$$T_i := \uG(\F_{q^n})^{im}\times X_{n,1}\times\cdots\times X_{n,m} \times \uG(\F_{q^n})^{N-(i+1)m},$$
and $X_{n,k}$ is defined as in Proposition~\ref{key}.

To estimate the probability that a uniformly randomly chosen element of $G$ lies in $T$, we note that
membership in the $T_i$ are independent conditions.  The probability of membership in each $T_i$ is
$$\prod_{j=1}^m \frac{|X_{n,j}|}{|\uG(\F_{q^n})|} \ge  \frac{\epsilon^m}{2^m}$$
by (\ref{large-X}).  Therefore, the probability of membership in $T$ for a uniformly chosen element of $G$ is at least 
$$1-(1-\epsilon^m/2^m)^{\lfloor N/m\rfloor}.$$
Taking $n$ (and therefore $N$) sufficiently large, we can guarantee this exceeds $1-\delta/2$.  Thus, the probability that the random walk ends in $T$ is greater than $1-\delta$.

For $1\le j \le N$, let $\bg_j = (g_{1j},\ldots,g_{dj})$.  We have seen that for a random word $w$ of length $n$, the probability that $(w(\bg_1),\ldots,w(\bg_N))\in T$ is greater than $1-\delta$.  
Membership in $T$ implies membership in some $T_i$, which implies
$$w(\bg_{im+1}) \in X_{n,1},\ldots,w(\bg_{im+m}) \in X_{n,m},$$
and therefore, by Proposition~\ref{key}, if $\gamma(\bar w) = m$,  $w$ is geometrically almost uniform for $\uG$.

Thus, for each $m$, there is at most a probability of $\delta$ that a random word $w$ of length $n$ satisfies $\gamma(\bar w) = m$ and that $w$ is not geometrically almost uniform.
By Lemma~\ref{gcd}, for each fixed $\epsilon > 0$, there exists $M$ such that if $n$ is large enough, the probability that $\gamma(\bar w)$ is zero or greater than $M$ is less than $\epsilon$.  Therefore, the probability that $w$ is not geometrically almost uniform for $\uG$ is less than $\epsilon + M\delta$.
Choosing first $\epsilon$ and then $\delta$, we can make this quantity as small as we wish, proving the theorem.

\end{proof}

We remark that the proof also shows that almost all words $w$ are almost uniform for the family of groups $\{\uG(\F_{q^n})\mid n\ge 1\}$.
The proof, together with that of  \cite[Theorem~2]{LST}, implies that $w$ is almost always uniform for all finite simple groups with fixed root system and characteristic.
For instance, almost all $w$ are almost uniform for the Suzuki and Ree groups.

\section{Questions}

\begin{question}
If $\cG$ is a simple, simply connected group scheme over $\Z$,
does the probability that a random word is almost uniform for all simple groups of the form $\cG(\F_q)/Z(\cG(\F_q))$ go to $1$?
\end{question}

It seems likely that the methods of this paper will allow one to prove this for all characteristics satisfying some Chebotarev-type condition, but can one do it for 
all characteristics simultaneously, or even a density one set of characteristics?  Even more optimistically, one can ask:

\begin{question}
\label{unbounded-rank}
Does the probability that a random word is geometrically almost uniform for all simple, simply connected algebraic groups over finite fields go to $1$?
\end{question}

Given an $e$-tuple of words $w_1,\ldots,w_e\in F_d$, for each $G$ we can define a function $G^d\to G^e$, and we can ask about almost uniformity.
In geometric families, this reduces again to the question of the geometric irreducibility of the generic fiber of the morphism $\uG^d\to \uG^e$
for simple, simply connected algebraic groups over finite fields.  In the case that 
$$\Z^d/\Span_{\Z}(\bar w_1,\ldots,\bar w_e) \cong \Z^{d-e},$$
the function
$\uG(\F_{q^n})^d\to \uG(\F_{q^n})^e$ is surjective.  Geometric irreducibility for such words follows as before.

\begin{question}
For $e<d$, does the probability that a random $e$-tuple of elements of $F_d$ of length $n$ is geometrically almost uniform go to $1$ as $n\to \infty$?
\end{question}

Question~\ref{unbounded-rank} has an analogue for simple, simply connected compact Lie groups.  As a special case, one can ask:

\begin{question}
Does the probability that for a random word $w$ of length $n$
$$\lim_{m\to \infty} \Vert w_*U_{\SU(m)^d} - U_{\SU(m)}\Vert = 0$$
go to $1$ as $n\to\infty$?
\end{question}

\end{document}